\newcommand{\al}{\alpha}
\newcommand{\esp}{\quad\mbox{and}\quad}
\newcommand{\lr}{\longrightarrow}
\newcommand{\R}{\mathbb{R}} 
\newcommand{\K}{\mathbb{K}} 
\newcommand{\D}{\mathrm{D}}
\newcommand{\om}{\omega}
\newcommand{\J}{{\mathfrak{J}}}
\newcommand{\h}{{\mathfrak{h}}}
\newcommand{\we}{\wedge}
\newtheorem{Def}{Definition}[section]
\newtheorem{theo}{Theorem}[section]
\newtheorem{pr}{Proposition}[section]
\newtheorem{Le}{Lemma}[section]
\newtheorem{co}{Corollary}[section]
\newtheorem{exem}{Example}
\newtheorem{remark}{Remark}
\begin{document}
	
	\begin{frontmatter}

		\title{Cosymplectic Jacobi-Jordan Algebras}
		
		\author{S. El bourkadi and  M. W. Mansouri }
		\address{Department of Mathematics, Faculty of Sciences, Ibn Tofail University\\
			Analysis, Geometry and Applications Laboratory (LAGA)\\
			Kenitra, Morocco\\
			mansourimohammed.wadia@uit.ac.ma\\
			said.elbourkadi@uit.ac.ma}
		
		
		
		
		\begin{abstract}  
			We introduce the notion of cosymplectic structure on Jacobi-Jordan algebras, and we state that they are related to symplectic Jacobi-Jordan algebras. We show in particular, that they support a right-skew-symmetric product. We also study the double extension constructions of cosymplectic Jacobi-Jordan algebras and give a complete classification in dimension five.
		\end{abstract}
		
		\begin{keyword}
			Cosymplectic structures, Jacobi-Jordan algebras, Double extensions.
			
			\MSC 17C10 \sep  17C50 \sep	 17C55  \sep  53D15.
		\end{keyword}
		
	\end{frontmatter}
	
	\section{Introduction}
	
	A Jacobi-Jordan algebra, denoted by JJ-algebra for short, is a pair $(\J,\cdot)$ consisting of a vector space $\J$ and a product $``\cdot"$, satisfying the following identities
	\begin{enumerate} 
		\item[(i)] $x\cdot y=y\cdot x$ (commutativity).
		\item[(ii)] $\oint x\cdot (y\cdot z):=x\cdot (y\cdot z)+y\cdot (z\cdot x)+z\cdot (x\cdot y) = 0$ (Jacobi identity),
	\end{enumerate}
	for any $x,y,z\in \J$, where $\oint$ denotes summation over the cyclic permutation.
	In other words, JJ-algebras are commutative algebras which their  products satisfy the Jacobi identity \cite{B-F}. JJ-algebras are introduced by Burde and Fialowski in \cite{B-F}. Note that this type of algebra has been known and appears under different names in the literature (e.g., Jordan algebras of nil index $3$ in \cite{W}, Lie-Jordan algebras in \cite{K-O}, and mock-Lie algebras in \cite{Z}).

	An important class of non-associative algebras is given by \emph{Jordan algebras}. Which are commutative non-associative algebras satisfying the identity: $x^2 \cdot(y\cdot x) = (x^2\cdot y)\cdot x$, it follows that JJ-algebra is a subclass of Jordan algebra, i.e., Any JJ-algebra is a Jordan algebra (see \cite{B} and \cite{B-F}).
	
	JJ-algebra and related topics have attracted significant attention in recent years. This interest stems from both the pure geometrical and algebraic viewpoint. In particular, several authors introduce and study additional structures of geometric origin ( symplectic structure) on JJ-algebras, see \cite{B-B} and \cite{A}. 
	Libermann in \cite{L} defined cosymplectic manifolds as $(2n+1)$-dimensional manifolds equipped with a closed $1$-form $\al$ and closed $2$-form $\om$ such that $\al\wedge\om^n$ is a volume form. 
	In light of the previous definition and the frameworks of cosymplectic geometry  (see \cite{L-S}, \cite{E-M} and \cite{F-V}), and the connection between symplectic and cosymplectic structures. All these elements motivate our investigation of cosymplectic JJ-algebras.
	
	Our principal objective is to introduce the notion of cosymplectic structure on JJ-algebras, explore their basic properties, and characterize the relationship between symplectic JJ-algebras and cosymplectic JJ-algebras.
	
	This paper goes as follows: Section $2$ lays the background and preliminary concepts and some constructions of JJ-algebras. This includes a review and a discussion of well-known results.  Additionally, we present established results concerning symplectic JJ-algebras, which will serve as a foundation for the subsequent analysis. In section $3$, we define cosymplectic structure on JJ-algebra. This structure is a  pair $(\al,\om)$, where  $\al$ is a $1$-form and $\om$ is a $2$-form, satisfying certain conditions.  
	Furthermore, we study the relation between symplectic and cosymplectic JJ-algebras. In particular, we show that there exists a one-to-one correspondence between cosymplectic and symplectic JJ-algebras equipped with a two-step nilpotent symplectic anti-derivation. Section $4$, we adapt the techniques of double extensions as in symplectic JJ-algebras presented in \cite{B-B} to describe cosymplectic JJ-algebra,  and we prove  that any cosymplectic JJ-algebra is obtained as a double extension of a cosymplectic JJ -algebra. Finally, in Section $5$, using the results of section $3$, we give a complete classification, up to isomorphism, of five dimensional cosymplectic JJ-algebra.
	
	\textbf{Notation.}  Let $\{e_i\}_{1\leq i\leq n}$ stands for a basis of $\J$,
	and let $\{e^i\}_{1\leq i\leq n}$ be its
	dual basis on $\J^\ast$ and  $e^{ij}$  the 2-form $e^i\wedge
	e^j\in\wedge^2\J^*$. Set by  $\langle x \rangle:= \mathrm{span}_\K\{x \}$ the one-dimensional JJ-algebra spanned by $x$.
	
	\section{Preliminaries on Jacobi-Jordan algebras}
	In this section, we recall some basic definitions and properties of JJ-algebra and symplectic JJ-algebra. 
	
	Here and subsequently, all algebras are considered of finite dimension over a field $\K$ of characteristic different from $2$ or $3$. Let $(\J,\odot)$ be an algebra.
	\begin{Def} The algebra $(\J,\odot)$ is said to be an anti-associative algebra if the product $\odot$ satisfies the identity 		
		$$(x\odot y)\odot z=-x\odot (y\odot z), $$		
		for any $x,y,z\in \J$.
	\end{Def}	
	
	We will use the symbol $(x,y,z)^{\odot}$ to denote the anti-associator of three elements $x,y,z\in\J$, it is defined by	$(x,y,z)^{\odot}:=x\odot (y\odot z)+(x\odot y)\odot z$.
	
	The algebra $(\J,\odot)$ is said to be an anti-associative algebra if $(x,y,z)^{\odot}=0$, for any $x,y,z\in\J$.
	
	\begin{Def}
		Let $(\J,\odot)$ be an algebra, Consider a new product on $\J$ denoted by $``\cdot"$ and defined by 
		\[x\cdot y := x\odot y + y\odot x,\]
		for any $x,y\in \J$. Then, $(\J,\odot)$ is called an admissible JJ-algebra if $(\J,\cdot)$ is a JJ-algebra. Moreover, the product $\odot$ is called an admissible JJ-product.
	\end{Def}
	As an example, a direct calculation shows that any anti-associative algebra is an admissible JJ-algebra.
	
	\begin{Le}
		An algebra $(\J,\odot)$ is an admissible JJ-algebra if and only if \[\oint (x,y,z)^{\odot}=-\oint (x,z,y)^{\odot},\]
		for any $x,y,z\in \J$.
	\end{Le}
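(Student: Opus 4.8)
The plan is to reduce the whole statement to the single formal identity
\[
\oint x\cdot(y\cdot z)\;=\;\oint (x,y,z)^{\odot}+\oint (x,z,y)^{\odot},
\]
valid in \emph{any} algebra $(\J,\odot)$ once we set $x\cdot y:=x\odot y+y\odot z$. Granting this, the lemma follows at once: the product $\cdot$ is automatically commutative, so $(\J,\odot)$ is an admissible JJ-algebra exactly when $(\J,\cdot)$ obeys the Jacobi identity, i.e. $\oint x\cdot(y\cdot z)=0$; by the displayed identity this is the same as $\oint (x,y,z)^{\odot}=-\oint (x,z,y)^{\odot}$.

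To establish the identity I would expand the left-hand side directly. Using bilinearity and $y\cdot z=y\odot z+z\odot y$,
\[
x\cdot(y\cdot z)=x\odot(y\odot z)+x\odot(z\odot y)+(y\odot z)\odot x+(z\odot y)\odot x .
\]
Summing over the cyclic permutations $(x,y,z)\mapsto(y,z,x)\mapsto(z,x,y)$ yields the sum of all twelve degree-three monomials in $x,y,z$: the six "left-bracketed" words $(a\odot b)\odot c$ and the six "right-bracketed" words $a\odot(b\odot c)$, each appearing exactly once. On the other side, $\oint (x,y,z)^{\odot}$ contributes $x\odot(y\odot z)+(x\odot y)\odot z$ together with its two cyclic images, and $\oint (x,z,y)^{\odot}$ contributes $x\odot(z\odot y)+(x\odot z)\odot y$ together with its two cyclic images; together these list the same twelve monomials, each once. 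Matching the two lists gives the identity, hence the lemma.

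The only delicate point is the bookkeeping of those twelve cubic monomials — tracking, for each term, its bracketing and the ordering of $x,y,z$ it carries, and checking that nothing is repeated or missing on either side. I would lay this out as a small table (left-bracketed versus right-bracketed rows; the three cyclic classes as columns) so that the equality of the two collections is visibly a bijection. Note that the anti-associativity hypothesis is never used and there is no cancellation: the content is purely that the Jacobi expression for $\cdot$ coincides with the symmetrised cyclic anti-associator of $\odot$.
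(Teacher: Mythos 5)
Your proposal is correct and is precisely the ``straightforward calculation'' the paper leaves to the reader: expanding $\oint x\cdot(y\cdot z)$ produces each of the twelve cubic $\odot$-monomials exactly once, and these are exactly the terms of $\oint (x,y,z)^{\odot}+\oint (x,z,y)^{\odot}$, so the Jacobi identity for $\cdot$ (commutativity being automatic) is equivalent to the stated condition. The only blemish is a typo in your opening line, where the induced product should read $x\cdot y:=x\odot y+y\odot x$, as you in fact use in the subsequent expansion.
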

	\begin{proof}
		It is a straightforward calculation.
	\end{proof}
	\begin{Def}
		Let $(\J,\odot)$ be an algebra. Then
		\item[$1.$] $(\J,\odot)$ is called right-skew-symmetric algebra if 
		\[(x,y,z)^{\odot}+(x,z,y)^{\odot}=0, \]
		\item[$2.$] $(\J,\odot)$ is called left-skew-symmetric algebra if 
		\[(x,y,z)^{\odot}+(y,x,z)^{\odot}=0,\]
		for all $x,y,z\in \J$.
	\end{Def}
	
	\begin{co}
		Any right(left)-skew-symmetric algebra is an admissible JJ-algebra. \label{right-adm}
	\end{co}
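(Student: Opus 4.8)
The plan is to deduce the corollary directly from the preceding lemma, which reduces the assertion ``$(\J,\odot)$ is an admissible JJ-algebra'' to the single cyclic identity $\oint (x,y,z)^{\odot}=-\oint (x,z,y)^{\odot}$. So it suffices to produce this identity from right-skew-symmetry (and, by the same bookkeeping, from left-skew-symmetry).

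For the right-skew-symmetric case, I would start from $(a,b,c)^{\odot}+(a,c,b)^{\odot}=0$ and specialize $(a,b,c)$ to each of the three cyclic shifts of $(x,y,z)$, obtaining
\[
(x,y,z)^{\odot}+(x,z,y)^{\odot}=0,\qquad (y,z,x)^{\odot}+(y,x,z)^{\odot}=0,\qquad (z,x,y)^{\odot}+(z,y,x)^{\odot}=0.
\]
Adding these three equations, the first summands assemble into $\oint (x,y,z)^{\odot}$, while the second summands $(x,z,y)^{\odot}+(y,x,z)^{\odot}+(z,y,x)^{\odot}$ are precisely $\oint (x,z,y)^{\odot}$; hence $\oint (x,y,z)^{\odot}=-\oint (x,z,y)^{\odot}$, and the lemma gives the conclusion. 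For the left-skew-symmetric case the argument is identical: specialize $(a,b,c)^{\odot}+(b,a,c)^{\odot}=0$ to the cyclic shifts of $(x,y,z)$ and sum; the leading terms again give $\oint (x,y,z)^{\odot}$ and the remaining terms reorganize into $\oint (x,z,y)^{\odot}$.

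The only point requiring a little care — and the closest thing to an obstacle here — is the combinatorial check that the ``transposed'' anti-associators produced by the three specializations are exactly the cyclic sum $\oint (x,z,y)^{\odot}$ rather than some other reordering; once the three instances are written out explicitly this is immediate, so I expect the whole proof to be a two-line computation invoking the lemma.
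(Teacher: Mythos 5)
Your proposal is correct and follows exactly the route the paper intends: the paper's proof consists solely of the remark "It follows from the previous lemma," and your computation — specializing the (right- or left-)skew-symmetry identity to the three cyclic shifts of $(x,y,z)$ and summing to obtain $\oint (x,y,z)^{\odot}=-\oint (x,z,y)^{\odot}$ — is precisely the omitted verification. Your combinatorial check that the three transposed anti-associators assemble into $\oint (x,z,y)^{\odot}$ is also right, since the cyclic shift sends $(x,z,y)$ to $(y,x,z)$ and then to $(z,y,x)$.
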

	\begin{proof}
		It follows from the previous lemma. 
	\end{proof}
	\begin{Def}
		Let $(\J,\cdot)$ be an algebra, and $\D$ be a linear map $\D: \J \lr \J $.
		\begin{enumerate} 
			\item[$(1)$] $\D$ is called a derivation of $(\J,\cdot)$ if  \[\D(x\cdot y)= \D(x)\cdot y+x\cdot \D(y), \]
			\item[$(2)$] $\D$ is called an anti-derivation of $(\J,\cdot)$ if  \[\D(x\cdot y)= -\D(x)\cdot y-x\cdot \D(y),\]
			for any $x,y\in \J$.		
		\end{enumerate}
		
	\end{Def}	
	Let $\mathrm{Der}(\J)$ denotes the set of all derivations of the algebra $(\J,\cdot)$ and $\mathrm{Ader}(\J)$ stands for the set of all anti-derivations of the algebra $(\J,\cdot)$.
	\begin{remark} Let $(\J,\cdot)$ be a JJ-algebra. For all $x\in \J$, we denote by $\mathrm{L}_x$ and $\mathrm{R}_x$ the left and right-multiplication on $(\J,\cdot)$, respectively. Since $(\J,\cdot)$ is commutative, it follows trivially that $\mathrm{L}_x=\mathrm{R}_x$. It follows from the Jacobi identity that $\mathrm{L}_x$ and $\mathrm{R}_x$ are anti-derivations of $(\J,\cdot)$.
	\end{remark}
	
		\textbf{JJ-algebras extensions}. Motivated by the classic notion of double extensions of Lie algebras, we adapt the same approach here with JJ-algebras as follows.

	Let $(\J,\cdot)$ be a JJ-algebra and $\theta$ a symmetric bilinear form on $\J$.
	
	On the vector space $\J_{(\theta,e)}=\J\oplus \langle e\rangle$, we define the product $``\cdot_\theta"$ by \begin{align*}
		x\cdot_\theta y &= x\cdot y+\theta(x,y)e,
	\end{align*}
	for every $x,y\in\J$. It is clear that $(\J_{(\theta,e)},\cdot_\theta)$  is a JJ-algebra if and only if $\theta $ satisfies \begin{equation}\oint\theta(x\cdot y,z)=0,\label{cocycle}
	\end{equation}
	for any $x,y,z\in \J$.
We call  $(\J_{(\theta,e)},\cdot_\theta)$ the central extension of $\J$ by $\theta$.

Let $\tilde{\J}=\langle d\rangle\oplus \J_{(\theta,e)}$ be the direct sum of $ \J_{(\theta,e)}$ with the one-dimensional vector space $\langle d\rangle$  and $\D$ be a linear map $\D: \J_{(\theta,e)} \lr \J_{(\theta,e)} $ . Consider the following product on $\tilde{\J}$ denoted by $``\bullet"$ and defined by
\begin{align}  \label{nrod}
	x\bullet y &= x\cdot y+\theta(x,y)e,\nonumber \\
	d \bullet x&= \D(x), \\
	d\bullet d &= a\in\J,\nonumber 
\end{align}
Assume that the product  $``\bullet"$ satisfies the Jacobi identity. Then, a direct calculation ensures the following proposition.
\begin{pr}\label{pr 2.1}
	Let $(\J,\cdot)$ be a JJ-algebra, then the vector space $\tilde{\J}=\langle d\rangle\oplus\J \oplus \langle e\rangle\ $ equipped with the product $``\bullet"$ defined by \eqref{nrod}. Then $(\tilde{\J},\bullet)$ is a JJ-algebra if and only if 
\begin{equation}\label{cp}
	\D\in\mathrm{Ader}(\J), \quad a\in\ker(\D)\esp \D^2=-\frac12\mathrm{L}_a. 
\end{equation}

\end{pr}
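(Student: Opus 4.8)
The plan is to check the Jacobi identity for $\bullet$ directly, exploiting that $\bullet$ is already commutative: commutativity of $\cdot$, symmetry of $\theta$, and $d\bullet x=x\bullet d=\D(x)$, together with $e$ being central, make $\bullet$ symmetric. Hence $\oint u\bullet(v\bullet w)$ is a symmetric trilinear function of $u,v,w\in\tilde{\J}$, and it suffices to test it on triples of the distinguished generators $x,y,z\in\J$, $e$, and $d$. First I would dispose of the easy triples: a product in $\tilde{\J}$ never has a $d$-component, so $e\bullet(v\bullet w)\in e\bullet(\J\oplus\langle e\rangle)=\{0\}$, and by symmetry every triple containing $e$ contributes $0$; and for the triple $(x,y,z)$ with all entries in $\J$, expanding \eqref{nrod} gives $\oint x\cdot(y\cdot z)+\bigl(\oint\theta(x\cdot y,z)\bigr)e$, which vanishes because $(\J,\cdot)$ is a JJ-algebra and $\theta$ satisfies the cocycle condition \eqref{cocycle} (precisely what makes $\J_{(\theta,e)}$ a JJ-algebra to begin with). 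This leaves only the families $(d,x,y)$, $(d,d,x)$ and $(d,d,d)$ with $x,y,z\in\J$.

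Next I would evaluate these three families using \eqref{nrod}, separating the $\J$-part from the $\langle e\rangle$-part. For $(d,x,y)$ the cyclic sum is $\D(x\cdot y)+x\bullet\D(y)+y\bullet\D(x)$, whose $\J$-component is $\D(x\cdot y)+x\cdot\D(y)+\D(x)\cdot y$; demanding this vanish for all $x,y$ is exactly $\D\in\mathrm{Ader}(\J)$ (the $\langle e\rangle$-component recording the compatibility of $\D$ with $\theta$). For $(d,d,x)$ the first two cyclic terms both collapse to $d\bullet\D(x)=\D^2(x)$ and the third is $x\bullet a$, so the sum equals $2\D^2(x)+x\cdot a+\theta(x,a)e$; vanishing of the $\J$-part gives precisely $\D^2=-\tfrac12\mathrm{L}_a$. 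For $(d,d,d)$ the sum is $3\,d\bullet a=3\D(a)$, so since $\mathrm{char}\,\K\neq3$ the condition is $\D(a)=0$, i.e.\ $a\in\ker\D$. Reading these three equalities backwards shows that $\D\in\mathrm{Ader}(\J)$, $a\in\ker\D$ and $\D^2=-\tfrac12\mathrm{L}_a$ are also sufficient, which establishes the equivalence.

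I expect the main obstacle to be the $(d,d,x)$ case: one must be careful about applying $\D$ to quantities that may already carry an $\langle e\rangle$-term, and keep the scalar ($\langle e\rangle$-valued) contributions straight; this is where the precise factor $\tfrac12$ is pinned down, it being the reciprocal of the multiplicity $2$ with which $\D^2(x)$ enters the cyclic sum. Everything else is routine multilinear bookkeeping, and structurally the whole argument is the JJ-algebra mirror of the classical double-extension identity for Lie algebras.
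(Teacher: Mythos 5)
Your proposal is correct and is exactly the ``direct calculation'' that the paper invokes without writing out: reduce the Jacobi identity for the commutative product $\bullet$ to the generator triples $(x,y,z)$, $(d,x,y)$, $(d,d,x)$, $(d,d,d)$ (triples containing $e$ vanishing trivially), which yield respectively the cocycle/JJ conditions, $\D\in\mathrm{Ader}$, $2\D^2+\mathrm{L}_a=0$, and $3\D(a)=0$ with $\mathrm{char}\,\K\neq 3$. The only point to keep straight (as you note) is that $\D$ takes values in $\J\oplus\langle e\rangle$, so the $\langle e\rangle$-components of the $(d,x,y)$ and $(d,d,x)$ cases are what produce the auxiliary conditions on $\lambda$ and $\theta$ recorded in the remark following the proposition.
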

\begin{remark}
\begin{enumerate}
\item	If a pair $(\D,a)$ satisfies \eqref{cp}, then it is called an
	 admissible pair of $(\J,\cdot)$ l (cf. \cite{B-B}, Definition $2.5$) .	 
\item 	Consider $\D(x)= \varphi(x)+\lambda(x)e$, for any $x\in \J$ where $(\varphi,\lambda)\in \mathrm{End}(\J)\times \J^*$. Then  the condition \eqref{cp} is equivalent to $(\varphi,a)$ is an admissible pair of $(\J,\cdot)$ and 
	\begin{equation*}\label{c0}
	\left\{
	\begin{array}{rl}
		 \lambda(a)&=0,\\
	\lambda(x\cdot y)&=-\theta(\varphi(x),y)-\theta(\varphi(y),x),\\
	 \theta(a,x)&=-2\lambda\circ \varphi(x),
	\end{array}
	\right.
\end{equation*}
for any $x, y \in \J$. 
\end{enumerate}
\end{remark}	
	We say that $(\tilde{\J},\bullet)$ is a \emph{double extension} of JJ-algebra $(\J,\cdot)$ by $(\theta,\D,a)$ or  $(\theta,\varphi,\lambda,a)$.

		\textbf{Symplectic JJ-algebras}.	Let $(\h,\cdot)$ be a JJ-algebra. We recall (cf. \cite{B-B}, Definition $3.1$) that a symplectic structure on $\h$ is given by a skew-symmetric non-degenerate bilinear form $\om$ satisfying the following condition
	\begin{equation*}
		\oint\om(x,y,z):=\om(x\cdot y,z)+\om(z\cdot x,y)+\om(y\cdot z,x)=0, 
	\end{equation*}
	for any $x, y, z\in\h$.		
	If a JJ-algebra $\h$ is equipped with such symplectic structure $\om$, then $(\h,\om)$ is called a symplectic JJ-algebra.
	\begin{remark}
		It makes sense to reconsider the condition of non-degeneracy of $\om$. In other words, saying that $\om$ is non-degenerate is equivalent to stating that $\om^n\neq 0$, where 
		$\om^n:=\om \wedge\underbrace{\cdots}_\text{$n$-times}\wedge \om$.
	\end{remark}
	Furthermore (see also \cite{B-B}, Proposition $3.1$), there exists a product on the vector space $\h$ denoted  $``\odot"$ and given by 
	\begin{equation}\label{symproduct}
		\om(x\odot y,z)=\om(x,y\cdot z),
	\end{equation}
	and satisfies
	\begin{enumerate} 
		\item[$(1)$]$x\cdot y=x\odot y+y\odot x$\;\;(admissible) 
		\item[$(2)$] $(x,y,z)^{\odot}+(x,z,y)^{\odot}=0$,\;\;(right-skew-symmetric)
	\end{enumerate}
	for any $x, y, z\in\h$.
	
	The product $``\odot"$ is called the right-skew-symmetric product associated with the symplectic JJ-algebra $(\h,\om)$. 		
	
		Recall  that two symplectic JJ-algebras $(\h_1,\om_1)$ and $(\h_2,\om_2)$ are isomorphic if there exists an isomorphism of JJ-algebras  $\phi :\h_1\lr \h_2$ such that 
	\[ \om_2(\phi(x),\phi(y))=\om_1(x,y),\]
	for any $x, y\in\h_1$.
	\begin{exem}
		Let $(H_4, \cdot)$ be the four-dimensional JJ-algebra defined
		above by		
		\[ e_1 \cdot e_1 = e_2,\; e_1 \cdot e_3 = e_3 \cdot e_1 = e_4,\]
		where $\{e_1, e_2, e_3, e_4\}$ is a basis of $H_4$. The skew-symmetric bilinear form
		$\om=e^{12}+2e^{23}$ is a symplectic form on $H_4$. Moreover,	any four-dimensional symplectic JJ-algebra is isomorphic to the unique symplectic JJ-algebra $(H_4,\om)$ (see, \cite{B-B}).
	\end{exem}

	\section{Cosymplectic Jacobi-Jordan algebras}
	In this section we give a definition of cosymplectic structure on JJ-algebras. In particular, we state that the dimension of cosymplectic JJ-algebra is always necessarily odd.
	We establish that every cosymplectic JJ-algebra induces a corresponding symplectic JJ-algebra. Furthermore, under certain conditions and with additional data, we show that there is a one-to-one correspondence between cosymplectic and symplectic JJ-algebras.   
	\begin{Def} An almost cosymplectic structure on a JJ-algebra $(\J,\cdot)$ is a pair  $(\al,\om)$, where $\al$ is a $1$-form  and $\om$ is a skew-symmetric bilinear form such that $\al \wedge \om^n\neq 0$, the triplet $(\J,\al,\om)$ is called  almost cosymplectic JJ-algebra.
		
		The almost cosymplectic JJ-algebra $(\J,\al,\om)$  is said to be a  cosymplectic JJ-algebra if $\al$ and $\om$ satisfy the following conditions 
		\begin{equation}\label{alclosed}
			\al(x\cdot y)	=0,
		\end{equation}
		\begin{equation}\label{omclosed}		 
			\oint\om(x\cdot y,z)=0, 
		\end{equation}
		for any $x, y, z\in\J$.
	\end{Def}
	From the fact that $ \al \wedge \om^n\neq 0$, any  cosymplectic structure $(\al,\om)$ on $(\J,\cdot)$ induces an isomorphism $\Psi$ given by \begin{equation}\label{1}
		\begin{array}{rcl}
			\Psi : \J& \lr & \J^* \\
			x & \longmapsto & \om(x,.) +\al(x)\al(.),
		\end{array}
	\end{equation}
	for any $x\in\J$. Consequently, the unique vector $\xi=\Psi^{-1}(\al)$ on $\J$ is called \textit{Reeb vector} of the cosymplectic JJ-algebra  $(\J,\al,\om)$, and it is completely described by the following conditions
	\begin{align}
		\al(\xi)&=1,\\
		\om(\xi,.)&=0.\label{omxi}
	\end{align}
	\begin{remark}
		\begin{enumerate}
			\item Substituting $z=\xi$ into $\eqref{omclosed}$, we find that
			\begin{equation}\label{der}
				\om(x,y\cdot\xi)+\om(y,x\cdot\xi)=0, 
			\end{equation}
			for any $x, y\in\J$.	
			\item In particular, if $y=\xi$, the equation above gives that $\om(x,\xi \cdot\xi)=0$, for all $x\in \J $. Therefore, $\xi^{2}=\xi\cdot \xi=0\label{xinul}$. 
			
			\item As in the case with symplectic JJ-algebras being even-dimensional, the condition $\al \we \om^{n}\neq 0$, ensures that any cosymplectic JJ-algebra must have odd dimension.
		\end{enumerate}
	\end{remark}
	\begin{Le}\label{Le3.1}
		Let $(\J,\al,\om)$ be a cosymplectic JJ-algebra.
		Then, $\h=\mathrm{ker(\al)}$ is an ideal of $\J$ and $ (\h,\om_{\h}) $ is a symplectic JJ-algebra.
	\end{Le}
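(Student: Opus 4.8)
The plan is to verify the two assertions separately; the first is essentially immediate from \eqref{alclosed}, and the second reduces to a single non-degeneracy check handled by the isomorphism $\Psi$.

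\textbf{$\h$ is an ideal.} Since $\al\we\om^n\neq0$ forces $\al\neq0$, the subspace $\h=\ker(\al)$ has codimension one, and the Reeb vector $\xi$ (with $\al(\xi)=1$) furnishes the splitting $\J=\h\oplus\langle\xi\rangle$. The crucial point is that condition \eqref{alclosed} asserts $\al(x\cdot y)=0$ for \emph{all} $x,y\in\J$, i.e. $\J\cdot\J\subseteq\h$; in particular $x\cdot h\in\h$ for every $x\in\J$ and $h\in\h$, so $\h$ is an ideal of $\J$ (two-sided by commutativity), hence in particular a JJ-subalgebra.

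\textbf{$(\h,\om_\h)$ is symplectic.} Let $\om_\h$ denote the restriction of $\om$ to $\h\times\h$; it is skew-symmetric because $\om$ is. For $x,y,z\in\h$ we have $x\cdot y\in\h$ by the previous step, so $\oint\om_\h(x\cdot y,z)=\oint\om(x\cdot y,z)=0$ by \eqref{omclosed}, and the required cocycle identity holds. It remains to show $\om_\h$ is non-degenerate. Let $h\in\h$ lie in the radical, i.e. $\om(h,h')=0$ for all $h'\in\h$. Since $\om(\xi,\cdot)=0$ by \eqref{omxi}, skew-symmetry also gives $\om(h,\xi)=0$, so $\om(h,\cdot)\equiv 0$ on $\J=\h\oplus\langle\xi\rangle$. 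Using $\Psi$ from \eqref{1} together with $\al(h)=0$, we get $\Psi(h)=\om(h,\cdot)+\al(h)\al(\cdot)=0$, whence $h=0$ by injectivity of $\Psi$. Thus $\om_\h$ is non-degenerate and $(\h,\om_\h)$ is a symplectic JJ-algebra. (Alternatively, one may note that $\om$ is the pullback of $\om_\h$ along the projection $\J\to\h$ and $\al(\xi)=1$, so $\al\we\om^n\neq0$ is equivalent to $\om_\h^{\,n}\neq0$.)

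There is no real obstacle here: the ideal property and the cocycle identity are pure restriction arguments, and the only step carrying any content, the non-degeneracy of $\om_\h$, is dispatched cleanly using the already-established isomorphism $\Psi$ and the defining property $\om(\xi,\cdot)=0$ of the Reeb vector. The proof is therefore a straightforward unwinding of the definitions.
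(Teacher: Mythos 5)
Your proof is correct. The ideal property and the restriction of the cocycle identity are handled exactly as in the paper ($\al(x\cdot y)=0$ gives $\J\cdot\J\subseteq\h$, and \eqref{omclosed} restricts verbatim to $\h$). The one step with content, non-degeneracy of $\om_\h$, is where you diverge: you run a radical argument through the isomorphism $\Psi$ of \eqref{1}, using $\om(\xi,\cdot)=0$ and $\al(h)=0$ to conclude $\Psi(h)=0$ and hence $h=0$, whereas the paper evaluates $\al\we\om^n$ on a basis $\{\xi,e_1,\dots,e_{2n}\}$ adapted to the splitting and identifies it with $\om_\h^{\,n}(e_1,\dots,e_{2n})\neq0$ --- which is precisely the alternative you mention in your closing parenthesis. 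Both arguments ultimately rest on the same hypothesis $\al\we\om^n\neq0$; the paper's version has the small advantage of not presupposing that $\Psi$ is an isomorphism (a fact the paper asserts from $\al\we\om^n\neq0$ but does not prove), while yours is coordinate-free and makes the role of the Reeb vector explicit. Either way the lemma is established.
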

	
	\begin{proof} By \eqref{alclosed}, we have $\J\cdot \J\subset \h$. In particular, $\h$ is an ideal of $\J$. Set by $\om_{\h}:=\om_{|\h\times\h }$  the restriction of $\om$ on $\h$. Let us prove that  $ (\h,\om_{\h})$ is a symplectic JJ-algebra. It is clear that $\om_\h$ is a skew-symmetric  bilinear form on $\h$ and satisfies \eqref{omclosed}. 
		Finally, let $\{\xi,e_1,...,e_{2n}\}$ be a basis of $\J$, and $\{e_1,...,e_{2n}\}$ be a basis of $\h$. We have
		\[0\not=\al\we\om^n(\xi,e_1,...,e_{2n})=\om_{\h}^n(e_1,...,e_{2n}),\]
		i.e., $\om_{\h}$ is  non-degenerate. Therefore,  $(\h,\om_\h)$ is a symplectic JJ-algebra.
	\end{proof} 
	Let $(\J,\al,\om)$ be a cosymplectic JJ-algebra with Reeb vector $\xi$, and the product $\odot$ be the right-skew-symmetric product associated with the symplectic JJ-algebra $(\h,\om_{\h})$ as given in \eqref{symproduct}.
	
	Set $\D_{\xi}$ to be a linear map $\D_{\xi}: \J \lr \J $ such that $\D_{\xi}(x):=\xi\cdot x$, for any $x\in \J$. It follows from \eqref{der} that $\D_{\xi}$ is an anti-derivation of $(\J,\cdot)$. 
	\begin{Le} 	
		The linear map $\mathrm{D}_{\xi}$, as defined above, is an anti-derivation of the algebra $(\h,\odot)$, i.e.,
		\[ \mathrm{D}_{\xi}(x \odot y)=-\mathrm{D}_{\xi}(x) \odot y-x \odot \mathrm{D}_{\xi}(y), \]
		for any $x,y\in h$.
	\end{Le}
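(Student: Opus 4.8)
The plan is to reduce the claimed identity to a statement about $\om_{\h}$ and then verify it by a short computation. First note that \eqref{alclosed} gives $\J\cdot\J\subset\h$, hence $\D_{\xi}(\J)\subset\h$; in particular $\D_{\xi}$ maps $\h$ into $\h$, and all the expressions in the statement, as well as $\D_{\xi}(x)\odot y$ and $x\odot\D_{\xi}(y)$, are well defined in $\h$. Since $\om_{\h}$ is non-degenerate (Lemma~\ref{Le3.1}), it suffices to prove
\[
\om\big(\D_{\xi}(x\odot y),z\big)=-\om\big(\D_{\xi}(x)\odot y,z\big)-\om\big(x\odot\D_{\xi}(y),z\big)
\]
for all $x,y,z\in\h$.

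The key observation is that $\D_{\xi}$ is \emph{$\om$-symmetric}: for all $a,b\in\J$ one has $\om(\D_{\xi}(a),b)=\om(a,\D_{\xi}(b))$. This is just \eqref{der} combined with the skew-symmetry of $\om$, namely $\om(a,b\cdot\xi)=-\om(b,a\cdot\xi)=\om(a\cdot\xi,b)$. Granting this, I would compute each side of the displayed identity. For the left-hand side: move $\D_{\xi}$ across $\om$ by $\om$-symmetry, expand $\odot$ by its defining relation \eqref{symproduct}, and then use the Jacobi identity of $(\J,\cdot)$ to rewrite $y\cdot(\xi\cdot z)=-\D_{\xi}(y\cdot z)-\D_{\xi}(y)\cdot z$, which gives
\[
\om\big(\D_{\xi}(x\odot y),z\big)=\om\big(x\odot y,\xi\cdot z\big)=\om\big(x,y\cdot(\xi\cdot z)\big)=-\om\big(x,\D_{\xi}(y\cdot z)\big)-\om\big(x,\D_{\xi}(y)\cdot z\big).
\]
For the right-hand side: expand both occurrences of $\odot$ by \eqref{symproduct} and then apply $\om$-symmetry to the first term,
\[
-\om\big(\D_{\xi}(x)\odot y,z\big)-\om\big(x\odot\D_{\xi}(y),z\big)=-\om\big(\D_{\xi}(x),y\cdot z\big)-\om\big(x,\D_{\xi}(y)\cdot z\big)=-\om\big(x,\D_{\xi}(y\cdot z)\big)-\om\big(x,\D_{\xi}(y)\cdot z\big).
\]
The two expressions coincide, which proves the lemma.

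I do not expect a genuine obstacle: the content is entirely in noticing the $\om$-symmetry of $\D_{\xi}$, after which everything is bookkeeping. The only subtleties worth stating explicitly are that \eqref{symproduct} may be applied only to elements of $\h$ — which is why one first checks $\D_{\xi}(x),\D_{\xi}(y),\xi\cdot z\in\h$ — and that the expansion of $y\cdot(\xi\cdot z)$ must be carried out with the product of $(\J,\cdot)$, not with $\odot$, since $\D_{\xi}$ is an anti-derivation of the former (indeed $\D_{\xi}=\mathrm{L}_{\xi}$) but, a priori, not yet of the latter.
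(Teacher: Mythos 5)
Your proof is correct and uses exactly the same ingredients as the paper's: pairing against an arbitrary $z\in\h$ via the non-degenerate form $\om_{\h}$, the $\om$-symmetry of $\D_{\xi}$ coming from \eqref{der}, the defining relation \eqref{symproduct} of $\odot$, and the Jacobi identity of $(\J,\cdot)$. The only cosmetic difference is that the paper runs a single chain of equalities from $\om_{\h}(\D_{\xi}(x\odot y)+\D_{\xi}(x)\odot y,z)$ down to $\om_{\h}(-x\odot\D_{\xi}(y),z)$, whereas you reduce both sides to a common expression; this is the same argument.
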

	\begin{proof}
		Let $x,y,z\in \h$, we have
		\begin{align*}
			\om_{\h}(\mathrm{D}_{\xi}(x\odot y)+\mathrm{D}_{\xi}(x)\odot y,z)&=\om_{\h}(\xi \cdot(x\odot y),z)+\om_{\h}((x\cdot\xi)\odot y,z)\\
			&=\om_{\h}(x\odot y,\xi \cdot z)+\om_{\h}(x\cdot\xi, y\cdot z)\\
			&=\om_{\h}(x,y\cdot(\xi \cdot z))+\om_{\h}(x, \xi\cdot (y\cdot z))\\
			&=\om_{\h}(x,-z\cdot(y\cdot \xi))\\
			&=\om_{\h}(-x\odot(y\cdot \xi),z)\\
			&=\om_{\h}(-x\odot \mathrm{D}_{\xi}(y),z).
		\end{align*}
		The non-degeneracy of $\om_{\h}$ ensures the result.		
	\end{proof}
	The following proposition shows that any cosymplectic JJ-algebra give rise to a right-skew-symmetric algebra.
	\begin{pr}
		Let $(\J,\al,\om)$ be a cosymplectic JJ-algebra. There exists a product $``*"$ on the underlying vector space $\J$  defined by
		\begin{equation} \label{isom}
			\Psi(x*y)(z)= \Psi(x)(y\cdot z),
		\end{equation}
		for any $x, y, z\in\J$, and it satisfies
		\begin{enumerate}
			\item  For all $x,\,y\in\h$
			\begin{equation}\label{Pr1}
				x*y=x\odot y+\om(x,y\cdot\xi)\xi.
			\end{equation}  
			
			\item For all $ x\in\J$
			\begin{equation}\label{Pr2}
				x*\xi=x \cdot \xi \esp  \xi* x =0.
			\end{equation}
			
			\item The algebra $(\J,*)$ is a right-skew-symmetric algebra. 
		\end{enumerate}		
	\end{pr}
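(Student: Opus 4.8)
The plan is to carry out every computation through the isomorphism $\Psi$ of \eqref{1}, exploiting that $*$ is \emph{defined} by \eqref{isom} and that $\Psi$ is injective, so that $x*y$ is pinned down by the scalars $\Psi(x*y)(z)$, $z\in\J$. The key preliminary remark is that every product $u\cdot v$ lies in $\h=\mathrm{ker}(\al)$ by \eqref{alclosed}, so \eqref{isom} collapses to
\[
\Psi(x*y)(z)=\om(x,y\cdot z),\qquad x,y,z\in\J.
\]
Setting $z=\xi$ and using $\al(\xi)=1$ and \eqref{omxi} gives $\al(x*y)=\om(x,y\cdot\xi)$; setting $z\in\h$ gives $\om(x*y,z)=\om(x,y\cdot z)$. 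These three identities are the whole computational input.

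For item $(1)$, take $x,y\in\h$ and write $x*y=u+\al(x*y)\,\xi$ with $u\in\h$; its $\xi$-component is then $\om(x,y\cdot\xi)$. For $z\in\h$ we have $\om(u,z)=\om(x*y,z)=\om(x,y\cdot z)=\om_{\h}(x\odot y,z)$, the last equality being the defining relation \eqref{symproduct} of $\odot$ on $\h$ (note $y\cdot z\in\h$); non-degeneracy of $\om_{\h}$ forces $u=x\odot y$, which is \eqref{Pr1}. For item $(2)$, applying $\Psi$ to $\xi*x$ and using \eqref{omxi} and \eqref{alclosed} shows $\Psi(\xi*x)=0$, hence $\xi*x=0$; and since $\xi\cdot z,\,x\cdot\xi\in\h$ one gets $\Psi(x*\xi)(z)=\om(x,\xi\cdot z)$ and $\Psi(x\cdot\xi)(z)=\om(x\cdot\xi,z)$, which coincide by \eqref{der} together with commutativity of $\cdot$, so $x*\xi=x\cdot\xi$.

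For item $(3)$, I would first record that $*$ is an admissible JJ-product for $(\J,\cdot)$, namely $a*b+b*a=a\cdot b$: indeed $\Psi(a*b+b*a)(c)=\om(a,b\cdot c)+\om(b,a\cdot c)$, and \eqref{omclosed} applied to the triple $(a,b,c)$ rewrites this as $\om(a\cdot b,c)=\Psi(a\cdot b)(c)$, whence the claim by injectivity of $\Psi$. Next, unwinding \eqref{isom} twice,
\[
\Psi\big((x*y)*z\big)(w)=\om\big(x,\,y\cdot(z\cdot w)\big),\qquad \Psi\big(x*(y*z)\big)(w)=\om\big(x,\,(y*z)\cdot w\big).
\]
Summing the four terms making up $(x,y,z)^{*}+(x,z,y)^{*}$ and replacing $y*z+z*y$ by $y\cdot z$ gives
\[
\Psi\big((x,y,z)^{*}+(x,z,y)^{*}\big)(w)=\om\Big(x,\ y\cdot(z\cdot w)+z\cdot(y\cdot w)+(y\cdot z)\cdot w\Big)=\om(x,0)=0,
\]
since the second argument is exactly the cyclic Jacobi sum $\oint y\cdot(z\cdot w)$ (after using commutativity of $\cdot$ to line up the terms). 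As $w$ is arbitrary, injectivity of $\Psi$ yields $(x,y,z)^{*}+(x,z,y)^{*}=0$, which is item $(3)$.

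The only delicate point is the bookkeeping in item $(3)$: one must keep track of which occurrences of $\om$ could a priori carry the $\al$-correction built into $\Psi$ in \eqref{1} — all the products appearing lie in $\h$, so none of them do — and one must correctly identify the bracket obtained after the $*$-to-$\cdot$ substitution with the three cyclic terms of the Jacobi identity. Everything else is a routine unwinding of the definitions of $\Psi$ and $*$.
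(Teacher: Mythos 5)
Your proof is correct. Items (1) and (2) follow essentially the same path as the paper: unwind \eqref{isom} through \eqref{1}, evaluate at $z=\xi$ to get the $\xi$-component via $\al(x*y)=\om(x,y\cdot\xi)$, and evaluate on $\h$ to identify the $\h$-component with $\odot$ (resp.\ with $x\cdot\xi$, using \eqref{der}); your observation that $\al$ kills all products by \eqref{alclosed}, so that $\Psi(x*y)(z)=\om(x,y\cdot z)$ always, just streamlines the bookkeeping. For item (3), however, you take a genuinely different and arguably cleaner route. The paper substitutes the explicit formulas \eqref{Pr1} and \eqref{Pr2} into $(x,y,z)^{*}+(x,z,y)^{*}$, expands, and invokes the right-skew-symmetry of $(\h,\odot)$ together with the Jacobi identity, which forces a case split (all arguments in $\h$ versus one argument equal to $\xi$). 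You instead first establish the admissibility identity $a*b+b*a=a\cdot b$ on all of $\J$ directly from \eqref{omclosed} and injectivity of $\Psi$, then compute $\Psi\bigl((x,y,z)^{*}+(x,z,y)^{*}\bigr)(w)$ in one stroke by iterating \eqref{isom}, reducing everything to the cyclic Jacobi sum $\oint y\cdot(z\cdot w)=0$. This is uniform in $x,y,z\in\J$, needs neither items (1)--(2) nor the right-skew-symmetry of $(\h,\odot)$, and mirrors the standard argument for the symplectic product \eqref{symproduct}; as a bonus it records explicitly that $*$ is an admissible JJ-product, a fact the paper only notes afterwards as a corollary. Your cautionary remark about the $\al$-corrections is well placed, and indeed harmless since every product lands in $\ker(\al)$.
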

	\begin{proof}
		The fact that $\Psi$ is an isomorphism induces a well-defined product on $\J$ denoted by $``*"$ and given by \eqref{isom} , in other terms for all $x,y\in\J$, and $z\in\J$, the equation \eqref{isom} is equivalent to
		\begin{equation}
			\om(x*y,z) +\al(x*y)\al(z)=\om(x,y\cdot z) +\al(x)\al(y\cdot z).\label{prod}
		\end{equation}
		\begin{enumerate}
			\item Consequently, for all $x,y\in\h$, and $z\in\J$, the equation \eqref{prod} becomes
			\begin{align*}
				\om(x*y,z)+\al(x*y)\al(z)&=\om(x,y\cdot z) 
			\end{align*}
			Substituting $z=\xi$ in the equation above, it follows that 			
			\begin{align*}
				\al(x*y)&=\om(x,y\cdot \xi).
			\end{align*}
			In particular, for all $z\in\h$,  we have that $\om_{\h}(x*y,z)=\om_{\h}(x,y\cdot z)$, which means that $``*"$ reduces to $``\odot"$ on the vector space $\h$, and  This shows 1.
			\item If $x=\xi$ and  $y\in\J$, the relation $\eqref{prod}$ becomes
			\begin{align*}
				\om(\xi*y,z) +\al(\xi*y)\al(z)=0.
			\end{align*}
			On the one hand, replacing $z$ by $\xi$ we obtain $\al(\xi* y)=0,$ this gives  $\xi*y\in\h$. On the other hand for all $z\in\h$, we have 
			\begin{align*}
				\om_{\h}(\xi*y,z)&=0,
			\end{align*}
			the non-degeneracy of $\om_{\h}$ shows that $\xi*y = 0$ for all $y\in\J$.
			
			For all $x\in\J$, if $y=\xi$, we also have
			\begin{align*}
				\om(x*\xi,z)+\al(x*\xi)\al(z)&=\om(x,\xi \cdot z). 
			\end{align*} 
			Let $z=\xi$, we have $\al(x*\xi)=0,$ this implies that $x*\xi\in\h$. Also for all $z\in\h$
			\begin{align*}
				\om_{\h}(x*\xi,z)&=\om_{\h}(x\cdot\xi,z). 
			\end{align*}
			As $\om_{\h}$ is non-degenerate, it follows that $x*\xi=x\cdot\xi,$ for all $x\in\J.$  
			\item On the one hand, for all $x,y$ and $z\in\J$, by \eqref{Pr1} and \eqref{Pr2}  we have	
			\begin{align*}
				(x,y,z)^{*}&=\big(x\odot y+\om(x,y\cdot\xi)\xi\big)*z+x*\big(y\odot z+\om(y,z\cdot\xi)\xi\big)\\
				&=(x\odot y)\odot z+\om(x\odot y,z\cdot\xi)\xi+x\odot (y\odot z)+\om(x,(y\odot z)\cdot\xi)\xi+\om(y,z\cdot\xi)x\cdot\xi\\
				&=(x\odot y)\odot z+x\odot (y\odot z)+ \om(x\odot y,z\cdot\xi)\xi+\om(x,(y\odot z)\cdot\xi)\xi+\om(y,z\cdot\xi)x\cdot\xi\\
				&=(x,y,z)^{\odot}+ \om(x\odot y,z\cdot\xi)\xi+\om(x,(y\odot z)\cdot\xi)\xi+\om(y,z\cdot\xi)x\cdot\xi.
			\end{align*}
			Thus,
			\begin{align*}
				(x,y,z)^{*}+(x,z,y)^{*}&=(x,y,z)^{\odot}+(x,z,y)^{\odot}+\om(x\odot y,z\cdot\xi)\xi+\om(x,(y\odot z)\cdot\xi)\xi\\
				&\;+\om(x\odot z,y\cdot\xi)\xi+\om(x,(z\odot y)\cdot\xi)\xi +\big(\om(y,z\cdot\xi)+\om(z,y\cdot \xi)\big)x\cdot\xi.
			\end{align*}	 	 
			Since $(h,\odot)$ is right-skew-symmetric (JJ-admissible) algebra, it follows that
			\begin{align*}
				(x,y,z)^{*}+(x,z,y)^{*}&=\big(\om(x,(z\cdot \xi)\cdot y)+\om(x,(y\cdot z)\cdot \xi)+\om(x,(\xi\cdot y)\cdot z\big)\xi-\om(\xi,y\cdot z)x\cdot\xi\\
				&=\om\big( x,(z\cdot \xi)\cdot y+(y\cdot z)\cdot \xi+(\xi\cdot y)\cdot z\big)\xi\\
				&=\om(x,\oint (z\cdot \xi)\cdot y)\\
				&=0.
			\end{align*}
			On the other hand, for all $x,y\in \h$.
			\begin{align*}
				(x,\xi,y)^{*}+(x,y,\xi)^{*}&=(x\cdot \xi)*y+\big(x\odot y+\om(x,(y\cdot \xi)\big)\cdot\xi+x\odot (y\cdot\xi)+\om\big(x,(x\cdot \xi)\big)\xi\\
				&=\D_{\xi}(x)\odot y+\om\big(x\cdot \xi,y\cdot \xi\big)\xi+\D_{\xi}(x\odot y) +\om \big(x,(y\cdot \xi)\cdot \xi\big)\xi +x\odot \D_{\xi}(y)\\
				&=\om \big(\xi,(y\cdot \xi)\cdot x\big)\xi\\
				&=0.
			\end{align*}
		\end{enumerate}
		Finally, we conclude that $(\J,*)$ is a right-skew-symmetric JJ-algebra.
	\end{proof}
	\begin{remark}
		Since $(\J,*)$ is a right-skew-symmetric JJ-algebra, it follows from Corollary \ref{right-adm} that $(\J,*)$ is an admissible JJ-algebra.
	\end{remark}
	
	\begin{theo}\label{theo1}
		Any	$(2n+1)$-dimensional cosymplectic JJ-algebra $(\J,\al,\om)$ is in one-to-one correspondence with a symplectic JJ-algebra $(\h,\om_\h,\D)$  of dimension $2n$  equipped with an anti-derivation $\D$ satisfying 
		\begin{equation}\label{omD}
			D^2(x)=0 \esp \om_\h(\D(x),y)=\om_\h(x,\D(y)),
		\end{equation}
		for any $x,y\in\h$.
	\end{theo}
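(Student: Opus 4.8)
The plan is to prove the asserted bijection by producing explicit constructions in both directions and then checking that they are mutually inverse and compatible with isomorphisms.

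\emph{From a cosymplectic JJ-algebra to the triple.} Starting from $(\J,\al,\om)$ with Reeb vector $\xi$, Lemma~\ref{Le3.1} already supplies the $2n$-dimensional symplectic JJ-algebra $(\h,\om_\h)$ with $\h=\ker(\al)$, and (since $\al(\xi)=1$) the remark above gives $\J=\h\oplus\langle\xi\rangle$ together with $\xi\cdot\xi=0$. I would set $\D:=\D_\xi|_\h$; condition \eqref{alclosed} forces $\xi\cdot x\in\h$, so $\D$ maps $\h$ to $\h$, and $\D_\xi$ is an anti-derivation of $(\J,\cdot)$, hence of the ideal $(\h,\cdot)$. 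The first equation of \eqref{omD} follows from $\xi\cdot\xi=0$ and the Jacobi identity, which give $2\,\D_\xi^2(x)=\oint\xi\cdot(\xi\cdot x)=0$ (using $\mathrm{char}\,\K\neq2$); the second equation of \eqref{omD} is exactly \eqref{der} read on $\h$.

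\emph{From the triple to a cosymplectic JJ-algebra.} Conversely, given a $2n$-dimensional symplectic JJ-algebra $(\h,\om_\h)$ and an anti-derivation $\D$ of $\h$ with $\D^2=0$ and $\om_\h(\D x,y)=\om_\h(x,\D y)$, I would put $\J:=\h\oplus\langle\xi\rangle$ and define the commutative product by $x\bu y=x\cdot y$ for $x,y\in\h$, $\ \xi\bu x=x\bu\xi=\D(x)$ for $x\in\h$, and $\xi\bu\xi=0$; the $1$-form $\al$ by $\al|_\h=0,\ \al(\xi)=1$; and the $2$-form $\om$ by $\om|_{\h\times\h}=\om_\h,\ \om(\xi,\cdot)=0$. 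Checking that $(\J,\bu)$ is a JJ-algebra splits the Jacobi identity into cases: on $\h$ it is the Jacobi identity of $\h$; on $\{x,y,\xi\}$ with $x,y\in\h$ it reduces to $\D(x\cdot y)+\D(x)\cdot y+x\cdot\D(y)=0$, i.e.\ to $\D$ being an anti-derivation; on $\{x,\xi,\xi\}$ it reduces to $2\D^2(x)=0$; the remaining case is trivial. Since $\J\bu\J\subseteq\h=\ker(\al)$, \eqref{alclosed} holds. For \eqref{omclosed}, the case $x,y,z\in\h$ is the symplectic cocycle of $\h$; inserting exactly one $\xi$, say $z=\xi$, the cyclic sum equals $\om(\D y,x)+\om(\D x,y)$ because $\om(x\cdot y,\xi)=0$, and this vanishes by the hypothesis $\om_\h(\D x,y)=\om_\h(x,\D y)$; inserting two or three $\xi$'s, every term is zero. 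Finally $\om(\xi,\cdot)=0$ gives $\om^n=\om_\h^n$ in $\wedge^{2n}\h^\ast$, so evaluating $\al\we\om^n$ on a basis $\{\xi,e_1,\dots,e_{2n}\}$ of $\J$ returns $\om_\h^n(e_1,\dots,e_{2n})\neq0$. Hence $(\J,\al,\om)$ is a cosymplectic JJ-algebra of dimension $2n+1$, with Reeb vector $\xi$.

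\emph{Mutual inversion and isomorphisms.} Applying the first construction to the algebra built in the second recovers the original triple, since in the extension $\xi$ is the Reeb vector and $\D_\xi|_\h=\D$. Conversely, applying the second construction to the triple associated with a cosymplectic $(\J,\al,\om)$ reproduces $(\J,\al,\om)$: indeed $\J=\h\oplus\langle\xi\rangle$ as a vector space, the product is recovered from $\xi\cdot x=\D_\xi(x)$ and $\xi\cdot\xi=0$, and $\al,\om$ from $\al|_\h=0,\ \al(\xi)=1,\ \om(\xi,\cdot)=0$. Finally, an isomorphism of cosymplectic JJ-algebras carries $\ker(\al)$ and the Reeb vector of the source to those of the target, hence restricts to a symplectic JJ-isomorphism $\h_1\to\h_2$ intertwining the associated anti-derivations; conversely any such isomorphism of triples extends to a cosymplectic isomorphism by sending $\xi_1$ to $\xi_2$. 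This makes the correspondence a bijection on isomorphism classes. The verifications above are routine case-checks; the genuinely delicate point is \eqref{omclosed} in the mixed case $\{x,y,\xi\}$, where the symmetry hypothesis $\om_\h(\D x,y)=\om_\h(x,\D y)$ is precisely what collapses the sum — and, at a bookkeeping level, fixing the right notion of isomorphism of triples $(\h,\om_\h,\D)$ so that the stated one-to-one correspondence is well posed.
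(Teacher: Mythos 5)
Your proposal is correct and follows essentially the same route as the paper: restrict to $\h=\ker(\al)$ with $\D=\D_\xi$ in one direction, and build the one-dimensional extension by $\xi$ with $\xi\bu x=\D(x)$, $\xi\bu\xi=0$ in the other. You actually supply two details the paper leaves implicit — the computation $2\D_\xi^2(x)=\oint\xi\cdot(\xi\cdot x)=0$ giving $\D^2=0$, and the compatibility with isomorphisms, which the paper defers to Proposition~\ref{Pr3}.
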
	
	\begin{proof}
		Let $(\J,\al,\om)$ be a $(2n+1)$-dimensional cosymplectic JJ-algebra and $``\bullet"$ its product. Set $\h=\mathrm{ker}(\al)$ and $\om_{\h}:=\om_{|\h\times\h }$, it follows from Lemma \ref{Le3.1} that $(\h,\om_{\h})$ is a symplectic JJ-algebra.
		
		For all $x\in \h$, we have $ \xi\bullet x =\D_{\xi}(x) \in \h$, one can take $\D=\D_{\xi}\in \mathrm{Ader}(\h)$. By \eqref{der} we have  $\om_\h(\D_{\xi}(x),y)=\om_\h(x,\D_{\xi}(y))$, for any $x,y\in\h$.
		
Conversely, let $(\h,\om_\h,\D)$ be a symplectic JJ-algebra of dimension $2n$ endowed with an anti-derivation $\mathrm{D}$ satisfying the conditions above and $``\cdot"$ its product. It is clear to see that  $(\J=\langle \xi \rangle \oplus \h,\bullet) $ is a well defined JJ-algebra, such that
\begin{align*}
x\bullet y&=x\cdot y,\\
	 \xi \bullet x &=\D(x),\\
	\xi\bullet \xi&=0,
	\end{align*}

for any $x,y\in \h$. We extend $\om_{\h} $ on $\J$ by defining  $\om(\xi,.)=0$ and $\om(x,y)=\om_{\h}(x,y)$  for any $x,y\in \h$. Since $\om_{\h}$ is symplectic together with \eqref{omD}, this implies that $\om$ satisfies \eqref{omclosed}. 
		
Let $\al$ be a $1$-form on $\J$ defined by $\al_{\vert\h}=0$ and  $\al(\xi)=1$. The fact that $\mathrm{D}(x)=x\bullet\xi\in \h$, implies that $\al(\xi \bullet x )=0$. Hence $\al(x \bullet y)=0$, for any $x,y\in \J$.
		
		Finally, let $\{\xi,e_1,...,e_{2n}\}$ be a basis of $\J$, and $\{e_1,...,e_{2n}\}$ be a basis of $\h$. We have
		\[\al\we\om^n(\xi,e_1,...,e_{2n})=\om_{\h}^n(e_1,...,e_{2n})\not=0,\]
		Thus, the skew-symmetric bilinear form $\om_{\h}$ is non-degenerate. We deduce that  $(\J,\al,\om)$ is a $(2n+1)$-dimensional cosymplectic JJ-algebra.	
	\end{proof}	
	\begin{Def}	
		Two cosymplectic JJ-algebras $(\J_1,\al_1,\omega_1)$ and $(\J_2,\al_2,\omega_2)$ are isomorphic if there exists an isomorphism of JJ-algebras  $\Phi : \J_1\lr \J_2$ such that 
		\[ \omega_2(\Phi(x),\Phi(y))=\omega_1(x,y)\esp \al_2\circ \Phi(x)=\al_1(x),  \]
		for any $x,y\in \J_1$.
	\end{Def}
	
	\begin{remark}
		In addition to the conditions mentioned above, for two cosymplectic JJ-algebras $(\J_1,\al_1,\om_1)$ and $(\J_2,\al_2,\om_2)$ to be isomorphic, we present an extra condition, which is  $\al_1(\xi_1)=\xi_2$. Indeed, let $\xi_{i}$ be the Reeb vector of $\J_{i}$, for $i=1,2$. Note that
		$\al_2\big(\phi(\xi_1)\big)=\al_1(\xi_1)=1$ and $\om_2(\phi(\xi_1),.)=\om_1(\xi_1,.)=0$, Therefore $\phi(\xi_1)=\xi_2$.
	\end{remark}
	
	\begin{pr}	\label{Pr3}
		Let  $(\J_1,\al_1,\om_1)$ and  $(\J_2,\al_2,\om_2)$ two  cosymplectic JJ-algebras, with  $(\h_1,\om_{\h_1},\D_1)$ and  $(\h_2,\om_{\h_2},\D_2)$
		the corresponding symplectic JJ-algebras and anti-derivations as in Theorem~\ref{theo1}.  Then, $(\J_1,\al_1,\om_1)$ and  $(\J_2,\al_2,\om_2)$ are isomorphic if and	only if there exists an isomorphism   $\phi : \h_1\lr \h_2$ of  symplectic JJ-algebras, such that   $\phi\circ\D_1 = \D_2\circ\phi$.
	\end{pr}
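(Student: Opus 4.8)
The plan is to exploit the decomposition $\J_i=\langle\xi_i\rangle\oplus\h_i$ coming from Theorem~\ref{theo1}, where $\xi_i$ is the Reeb vector and $\h_i=\ker\al_i$, and to pass between a cosymplectic isomorphism $\Phi:\J_1\lr\J_2$ and its restriction $\phi:=\Phi|_{\h_1}$, checking the defining conditions piece by piece. For the forward implication I would start from a cosymplectic isomorphism $\Phi:\J_1\lr\J_2$. By the remark preceding the statement, $\Phi$ must send $\xi_1$ to $\xi_2$, and since $\al_2\circ\Phi=\al_1$ it carries $\h_1=\ker\al_1$ bijectively onto $\h_2=\ker\al_2$; so $\phi:=\Phi|_{\h_1}$ is a linear bijection $\h_1\lr\h_2$. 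It is multiplicative because each $\h_i$ is an ideal of $\J_i$ (Lemma~\ref{Le3.1}) and $\Phi$ is multiplicative, and it preserves the restricted forms since, for $x,y\in\h_1$, $\om_{\h_2}(\phi(x),\phi(y))=\om_2(\Phi(x),\Phi(y))=\om_1(x,y)=\om_{\h_1}(x,y)$; hence $\phi$ is an isomorphism of symplectic JJ-algebras. Finally, as $\D_i$ is multiplication by the Reeb vector, $\D_i(x)=\xi_i\cdot x$ (Theorem~\ref{theo1}), one gets $\phi(\D_1(x))=\Phi(\xi_1\cdot x)=\xi_2\cdot\phi(x)=\D_2(\phi(x))$, i.e. $\phi\circ\D_1=\D_2\circ\phi$.

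For the converse I would take an isomorphism $\phi:\h_1\lr\h_2$ of symplectic JJ-algebras with $\phi\circ\D_1=\D_2\circ\phi$ and set $\Phi:\J_1\lr\J_2$, $\Phi(\xi_1):=\xi_2$ and $\Phi|_{\h_1}:=\phi$; this is a linear bijection since $\J_i=\langle\xi_i\rangle\oplus\h_i$. I would then check multiplicativity of $\Phi$, invariance of $\om$, and $\al_2\circ\Phi=\al_1$. Using the reconstructed product of Theorem~\ref{theo1} (so $x\cdot y\in\h_i$ for $x,y\in\h_i$, $\xi_i\cdot x=\D_i(x)$, and $\xi_i\cdot\xi_i=0$), multiplicativity splits into three cases: on $\h_1\times\h_1$ it holds because $\phi$ is an algebra morphism of $\h_i$; on $\langle\xi_1\rangle\times\h_1$ it is exactly the relation $\phi\circ\D_1=\D_2\circ\phi$; and on $\langle\xi_1\rangle\times\langle\xi_1\rangle$ both products vanish. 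Commutativity and bilinearity promote this to all of $\J_1$. For the two-form, $\om_1(\xi_1,\cdot)=\om_2(\xi_2,\cdot)=0$ by \eqref{omxi}, while for $x,y\in\h_1$ one has $\om_2(\Phi(x),\Phi(y))=\om_{\h_2}(\phi(x),\phi(y))=\om_{\h_1}(x,y)=\om_1(x,y)$, so bilinearity gives $\om_2(\Phi(u),\Phi(v))=\om_1(u,v)$ for all $u,v\in\J_1$. For the one-form, $\al_2(\Phi(\xi_1))=\al_2(\xi_2)=1=\al_1(\xi_1)$, and both $\al_1$ and $\al_2\circ\phi$ vanish on $\h_1$ since $\phi(\h_1)\subset\h_2=\ker\al_2$. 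Hence $\Phi$ is a cosymplectic isomorphism.

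I do not expect a genuine obstacle: once Theorem~\ref{theo1} is available, the whole argument is bookkeeping. The only point that needs care is to keep the various products carefully apart — the JJ-multiplications on $\J_1$ and $\J_2$, their restrictions to the ideals $\h_1$ and $\h_2$, and the reconstruction formula of Theorem~\ref{theo1} — so that the three-case verification in the converse draws only on the data $(\h_i,\om_{\h_i},\D_i)$ and the intertwining relation $\phi\circ\D_1=\D_2\circ\phi$, i.e. precisely what Theorem~\ref{theo1} encodes.
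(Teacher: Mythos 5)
Your proposal is correct and follows essentially the same route as the paper: both directions hinge on the decomposition $\J_i=\langle\xi_i\rangle\oplus\h_i$ from Theorem~\ref{theo1}, the fact that a cosymplectic isomorphism sends $\xi_1$ to $\xi_2$ and $\ker\al_1$ onto $\ker\al_2$, and in the converse the extension $\Phi|_{\h_1}=\phi$, $\Phi(\xi_1)=\xi_2$ with the intertwining relation $\phi\circ\D_1=\D_2\circ\phi$ supplying multiplicativity on $\langle\xi_1\rangle\times\h_1$. Your three-case bookkeeping is just a slightly more explicit version of the paper's verification.
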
	
	
	\begin{proof}
		Let $(\J_1,\al_1,\om_1)$ and $(\J_2,\al_2,\om_2)$ be two $(2n+1)$-dimensional cosymplectic JJ-algebras, assume that they are isomorphic by $\phi :\J_1 \longmapsto \J_2$, It follows from Lemma ~\ref{Le3.1} that $(\h_1,\om_{\h_1})$ and $(\h_2,\om_{\h_2})$ are symplectic JJ-algebras of dimension $2n$, where $\h_1=\mathrm{ker}(\al_1)$ and $\h_2=\mathrm{ker}(\al_2)$. In particular, $\phi :\h_1 \longmapsto \phi(\h_1)$ is an isomorphism of JJ-algebras and $\om_{\h_2}(\phi(\h_1),\phi(\h_1))=\om_{\h_1}(\h_1,\h_1)$ .
		
		For all $x\in \h_1$, we have $\al_1(x)=\al_2\big( \phi(x)\big)=0$, then $\phi(\h_1)\subset \h_2 $, since $\mathrm{dim}(\phi(\h_1))=\mathrm{dim}(\h_2)$, this implies that $\phi(\h_1)=\h_2$, we conclude that  $\phi :\h_1 \longmapsto \h_2$ is an isomorphism of symplectic JJ-algebras. By Theorem ~\ref{theo1}, we state that $\D_i(x)=\xi_i\cdot x$, for $i=1,2$ (without loss of generality, we fix $``\cdot"$ as product for all JJ-algebras). Then, for any $x\in \h_1 $, we have   $\phi\circ \D_1(x)=\phi(\xi_1\cdot x)=\xi_2\cdot \phi(x)=\D_2 \circ \phi(x)$.
		
		On the other hand, let $(\h_1,\om_{\h_1},\D_1)$ and  $(\h_2,\om_{\h_2},\D_2)$ be two isomorphic symplectic JJ-algebras with $\phi : \h_1\lr \h_2$ such that $\phi\circ\D_1 = \D_2\circ\phi$, where $\D_1$ and $\D_2$ two anti-derivations as in Theorem ~\ref{theo1}. Similarly to the second part of the proof of Theorem ~\ref{theo1}, we consider the vector spaces $(\J_i=\h_i \oplus \langle \xi_i \rangle,,\al_i,\Omega_i)$, for $i=1,2$. Then, we can extend $\phi$ to an isomorphism of JJ-algebras $\Phi :\J_1 \longmapsto \J_2$ by $\Phi|_{\h_1}=\phi$, and $\Phi(\xi_1)=\xi_2$. Indeed, $\Phi(\xi_1 \cdot \h_1)=\Phi\circ\D_1(\h_1)=\phi\circ\D_1(\h_1)=\D_2\circ\phi(\h_1)= \xi_2\cdot \h_2=\Phi(\xi_1)\cdot \Phi(\h_1)$. Its reminds to show that $\Phi$ is a cosymplectic isomorphism of JJ-algebras. On the one hand, it is clear that $\al_2\circ\Phi(\h_1)=\al_2(\h_2)=0=\al_1(\h_1)$, and  $\al_2\circ\Phi(\xi_1)=\al_2(\xi_2)=1  =\al_1(\xi_1)$. On the other hand, it is straightforward to see that  $\omega_2\big(\Phi(\h_1),\Phi(\h_1)\big)=\om_{\h_2}\big(\h_2,\h_2 \big)= \om_{\h_2}\big(\phi(\h_1),\phi(\h_1)\big)=\omega_1(\h_1,\h_1)$. Finally, $\omega_2\big(\Phi(\xi_1),\Phi(\h_1)\big)=\om_{\h_2}\big(\xi_2,\h_2 \big)=0=\omega_1(\xi_1,\h_1)$, which completes the proof.
	\end{proof}	
	If Theorem ~\ref{theo1} establishes a connection between $(2n+1)$-dimensional cosymplectic JJ-algebras and $2n$-dimensional symplectic JJ-algebras, then the following proposition presents the opposite effect. It establishes a relationship between a $(2n+1)$-dimensional cosymplectic JJ-algebra and a $(2n+2)$-dimensional symplectic JJ-algebras under certain conditions.
	\begin{pr} \label{prop}
		Let $(\J,\cdot)$ be a JJ-algebra, and $\al$, $\om$ be a $1$-form and $2$-form on $\J$, respectively. On the direct sum JJ-algebra $(\J\oplus \langle e\rangle,\cdot)$, we consider the $2$-form $\Omega=\om+\alpha\wedge e^{*}$. Then, $(\J,\al,\om) $ is a cosymplectic JJ-algebra if and only if $(\J\oplus \langle e\rangle,\Omega)$ is a symplectic JJ-algebra.
	\end{pr}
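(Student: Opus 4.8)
The plan is to translate, in a coordinate-free way, the two symplectic axioms for $\Omega$ on $\tilde\J:=\J\oplus\langle e\rangle$ into statements about $\al$ and $\om$, and to see that they match precisely the two cosymplectic axioms on $\J$. I will use throughout that in the direct sum JJ-algebra the product is $(x+ue)\cdot(y+ve)=x\cdot y$ for $x,y\in\J$ and $u,v\in\K$, so that $\tilde\J\cdot\tilde\J\subseteq\J$; and that, extending $\al$ and $\om$ trivially across $e$, one has $\Omega(x+ue,\,y+ve)=\om(x,y)+v\,\al(x)-u\,\al(y)$.

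First I would handle non-degeneracy. With $\dim\J=2n+1$ we have $\dim\tilde\J=2n+2$, so $\Omega$ is non-degenerate iff $\Omega^{n+1}\neq0$. Because $(\al\we e^{*})\we(\al\we e^{*})=0$ and $\om^{n+1}=0$ (it is a $(2n+2)$-form on the $(2n+1)$-dimensional space $\J$), the binomial expansion of $\Omega^{n+1}=(\om+\al\we e^{*})^{n+1}$ collapses to $(n+1)\,\om^{n}\we\al\we e^{*}=(n+1)\,(\al\we\om^{n})\we e^{*}$. This is a top-degree form on $\tilde\J$, hence it is nonzero exactly when $\al\we\om^{n}\neq0$. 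Thus $\Omega$ is non-degenerate if and only if $(\J,\al,\om)$ is almost cosymplectic.

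Next I would compare the cyclic cocycle conditions. Using $(x+ue)\cdot(y+ve)=x\cdot y\in\J$ together with the formula for $\Omega$, a short expansion gives, for $x,y,z\in\J$ and $u,v,w\in\K$,
\[\oint\Omega\big((x+ue)\cdot(y+ve),\,z+we\big)=\oint\om(x\cdot y,z)+\big(w\,\al(x\cdot y)+v\,\al(z\cdot x)+u\,\al(y\cdot z)\big).\]
If $(\J,\al,\om)$ is cosymplectic, \eqref{alclosed} kills the parenthesised term and \eqref{omclosed} kills $\oint\om(x\cdot y,z)$, so $\Omega$ satisfies the symplectic cocycle identity; combined with the previous paragraph, $(\tilde\J,\Omega)$ is a symplectic JJ-algebra. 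Conversely, if $(\tilde\J,\Omega)$ is symplectic, then taking $u=v=w=0$ recovers \eqref{omclosed} on $\J$, and taking $x,y\in\J$, $z=e$ (so $u=v=0$, $w=1$) recovers $\al(x\cdot y)=0$, i.e., \eqref{alclosed}; non-degeneracy of $\Omega$ then forces $\al\we\om^{n}\neq0$ by the first step, so $(\J,\al,\om)$ is cosymplectic. The only step needing a little care is the wedge-power bookkeeping for non-degeneracy — verifying $(\al\we e^{*})^{2}=0$ and $\om^{n+1}=0$ so that $\Omega^{n+1}$ reduces to the single monomial $(n+1)(\al\we\om^{n})\we e^{*}$ — after which the cocycle comparison is a direct expansion of the defining identities.
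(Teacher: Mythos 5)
Your proof is correct and its core computations are the same as the paper's: the binomial collapse $\Omega^{n+1}=(n+1)\,\om^{n}\we\al\we e^{*}$ (using $(\al\we e^{*})^{2}=0$ and the vanishing of $\om^{n+1}$ because $\om$ is pulled back from the $(2n+1)$-dimensional space $\J$), and the two specializations of the cyclic identity (all arguments in $\J$ to recover \eqref{omclosed}, one argument equal to $e$ to recover \eqref{alclosed}) are exactly how the paper handles the converse. Where you genuinely differ is the forward direction: the paper invokes Theorem~\ref{theo1} to split $\J=\h\oplus\langle\xi\rangle$ and defines $\Omega$ blockwise, checking only non-degeneracy and leaving the cocycle identity for $\Omega$ implicit, whereas your single expansion $\oint\Omega\big((x+ue)\cdot(y+ve),z+we\big)=\oint\om(x\cdot y,z)+w\,\al(x\cdot y)+v\,\al(z\cdot x)+u\,\al(y\cdot z)$ proves both directions of the cocycle equivalence at once and avoids any appeal to the Reeb decomposition; this is more self-contained and arguably more complete. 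The only caveat, which you share with the paper's own convention that non-degeneracy of $\om$ is equivalent to $\om^{n}\neq 0$, is that the scalar $n+1$ (and the factorials implicit in taking wedge powers) must be invertible in $\K$; this is automatic in characteristic zero but is not guaranteed by the paper's standing hypothesis that $\mathrm{char}\,\K\neq 2,3$.
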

	\begin{proof}
		Suppose that $(\J,\al,\om) $ is a cosymplectic JJ-algebra. In view of Theorem~\ref{theo1}, the vector space $\J$ always can be written as $\J=\h \oplus \langle \xi \rangle$, where $(\h,\om_{\h})$ is a symplectic JJ-algebra.
		
		Note that the vector space $\J \oplus \langle e\rangle$ is a direct sum of $\J$ with the one-dimensional vector space $\langle e\rangle$. Therefore, $(\J \oplus \langle e\rangle,\cdot)$  is a JJ-algebra.
		
		Let $\Omega$ be a skew-symmetric $2$-form on $\J \oplus \langle e\rangle$ defined as follows
		\[\Omega(x,y)=\om_{\h}(x,y), \;\;\;\;\;\Omega(\xi,e)=1 \esp \Omega(\xi,x)= \Omega(e,x)=0,\]
		for every $x,y\in\h$, it is clear that $\Omega$ is non-degenerate on $\J \oplus \langle e\rangle$. Hence, $(\J\oplus \langle e\rangle,\Omega)$ is a symplectic JJ-algebra.

		Conversely. Let $\J$ be a $(2n+1)$-dimensional JJ-algebra. Consider a $1$-form $\al$ and $2$-form $\om$ on $\J$. Assume that $(\J \oplus \langle e\rangle,\Omega)$ is a $(2n+2)$-dimensional symplectic JJ-algebra, where $\Omega=\om+\alpha\wedge e^{*}$.
		
		Since $\Omega$ is a symplectic structure, we have $\Omega^{n+1}\neq 0$. A straightforward calculation ensures that $\om^{n}\wedge \al \wedge e^{*}\neq 0$, which implies $\om^{n}\wedge \al \neq 0$ on $\J$.
		
		On the one hand, we have $\oint\Omega(x,y,z):=\om(x\cdot y,z)+\om(z\cdot x,y)+\om(y\cdot z,x)=0$ for all $x, y, z\in\J$, which makes $\om$ a skew-symmetric  bilinear form on $\J$ and satisfies the identity \eqref{omclosed}. 
		
		On the other hand, $\oint\Omega(x,y,e):=\al(x\cdot y)=0$, so $\al$ satisfies \eqref{alclosed}. We conclude that $(\J,\al,\om) $ is a cosymplectic JJ-algebra. 
	\end{proof}
	\section{Cosymplectic   JJ-algebras double extensions}

	Let $(\J,\al,\om)$ be a $(2n+1)$-dimensional  cosymplectic JJ-algebra and let	$(\tilde{\J}=\langle d\rangle\oplus\J \oplus \langle e\rangle,\bullet)$ be a double extension  of $(\J,\cdot)$  by $(\theta,\varphi,\lambda,a)$ and let $\tilde{\om}$ be a skew-symmetric $2$-form  and $\tilde{\al}$ a $1$-form  on $\tilde{\J}$ defined as follows
	\[\tilde{\om}=\om(x,y),\;\forall x,y\in\J,\; \tilde{\om}(d,e)=1,\;\tilde{\om}(d,.)=\tilde{\om}(e,.)=0,\]
	and 
	\[\tilde{\al}(x)=\al(x),\;\forall x\in\J,\; 
	\tilde{\al}(d)\in \K,\;\tilde{\al}(e)\in \K.\]
	On the one hand, suppose that $\tilde{\al}$ satisfies \eqref{alclosed}. It follows that
	\begin{equation}\label{c1}
		\left\{
		\begin{array}{l}
			\theta(x,y)\tilde{\al}(e)=0,\\
		\al\circ \varphi(x)+\lambda(x)\tilde{\al}(e)=0,\\
	\al(a)=0.
		\end{array}
		\right.
	\end{equation}
	On the other hand, suppose that $\tilde{\om}$ satisfies \eqref{omclosed}. Then, for any $x,y\in\J$, It follows that
	\begin{equation}\label{c2}
		\left\{
		\begin{array}{l}
	\theta(x,y)=\om_\varphi(x,y)\\
	2\lambda(x)=\om(a,x)\\
\om(\varphi(x),a)=0.
	\end{array}
		\right.
	\end{equation}

Where $\om_\varphi(x,y)=\om(\varphi(x),y)+\om(\varphi(y),x)$ for any $x$, $y\in\J$.	From the first row of the system \eqref{c1}, one can distinguish two cases.
	
	\textbf{The first case:} $\tilde{\al}(e)=0$. 
	
The conditions described above, together with Proposition \ref{pr 2.1}, yield the following result.

	\begin{theo}\label{the4.1}		Let $(\J,\al,\om)$ be a cosymplectic JJ-algebra and $(\varphi,a)$  an admissible pair of $(\J,\cdot)$. Then  $\tilde{\J}=\langle d\rangle\oplus\J \oplus \langle e\rangle\ $ equipped with the product 
		\begin{align*}  
			x\bullet y &= x\cdot y+\om_\varphi(x,y)e, \\
			d \bullet x&= \varphi(x)+\frac{1}{2}\om(a,x)e, \\
			d\bullet d &= a\in\J,
		\end{align*}
for any $x$ and $y\in\J$. Then,
		\[\tilde{\al}_{|\J}=\al+td^*\esp\tilde{\om}=\om + d^{*}\we e^{*},\quad t\in \R\]
is a  cosymplectic structure on  $\tilde{\J}$ if  $\al\circ \varphi=0$ and $\om(\varphi(x),a)=0$, for any $x\in\J$.		
\end{theo}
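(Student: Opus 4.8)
The plan is, after recording that $(\tilde{\J},\bu)$ is a JJ-algebra, to verify the three defining conditions of a cosymplectic structure for the triple $(\tilde{\J},\tilde{\al},\tilde{\om})$, where $\tilde{\al}$ is the $1$-form with $\tilde{\al}|_{\J}=\al$, $\tilde{\al}(d)=t$, $\tilde{\al}(e)=0$ and $\tilde{\om}=\om+d^{*}\we e^{*}$. First I would recognize $(\tilde{\J},\bu)$ as the double extension of $(\J,\cdot)$ by $(\theta,\varphi,\lambda,a)$ with $\theta:=\om_\varphi$ and $\lambda:=\frac12\om(a,\cdot)$, and check that Proposition~\ref{pr 2.1} applies, i.e.\ that \eqref{cp} holds in the form recorded in the Remark following it: $\theta=\om_\varphi$ is symmetric; a short cyclic computation using that $\varphi$ is an anti-derivation and \eqref{omclosed} gives $\oint\theta(x\cdot y,z)=0$; the identity $\lambda(x\cdot y)=-\theta(\varphi(x),y)-\theta(\varphi(y),x)$ follows from $\varphi^{2}=-\frac12\mathrm{L}_a$ together with \eqref{omclosed}; and $\lambda(a)=0$, $\theta(a,x)=-2\lambda(\varphi(x))$ follow from the skew-symmetry of $\om$ and from $\varphi(a)=0$ (part of $(\varphi,a)$ being admissible). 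This settles the JJ-algebra structure.

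Next I would check \eqref{alclosed} for $\tilde{\al}$ by evaluating it on the three kinds of product in \eqref{nrod}: on $x\bu y=x\cdot y+\om_\varphi(x,y)e$ it gives $\al(x\cdot y)+\om_\varphi(x,y)\tilde{\al}(e)=0$ because $\al$ satisfies \eqref{alclosed} and $\tilde{\al}(e)=0$; on $d\bu x=\varphi(x)+\frac12\om(a,x)e$ it gives $(\al\circ\varphi)(x)=0$ by hypothesis; and on $d\bu d=a$ it gives $\al(a)=0$, the remaining requirement of \eqref{c1} when $\tilde{\al}(e)=0$. For \eqref{omclosed} for $\tilde{\om}$ I would run $\oint\tilde{\om}(x\bu y,z)=0$ over all choices of arguments from a basis adapted to $\tilde{\J}=\langle d\rangle\oplus\J\oplus\langle e\rangle$. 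Using $\tilde{\om}(d,e)=1$ and $\tilde{\om}(d,\cdot)=\tilde{\om}(e,\cdot)=0$ on $\J$, the non-trivial cases collapse to: three arguments in $\J$, giving $\oint\om(x\cdot y,z)=0$; two arguments in $\J$ and one equal to $d$, giving $-\om_\varphi(x,y)+\om(\varphi(x),y)+\om(\varphi(y),x)=0$ by the definition of $\om_\varphi$; and $x=y=d$, $z\in\J$, giving $\om(a,z)-\om(a,z)=0$; every remaining case (those in which $e$, or a third copy of $d$, occurs) vanishes at once since $e$ is central and $\tilde{\om}$ pairs the $\J$-part trivially with $d$ and $e$. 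This is the step that consumes the last identity of \eqref{c2}, namely $\om(\varphi(x),a)=0$.

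For non-degeneracy, note $\dim\tilde{\J}=2n+3=2(n+1)+1$; since $(d^{*}\we e^{*})^{2}=0$ and $\om^{n+1}=0$ (as $\om$ is a form on the $(2n+1)$-dimensional $\J$), one gets $\tilde{\om}^{n+1}=(n+1)\,\om^{n}\we d^{*}\we e^{*}$, hence $\tilde{\al}\we\tilde{\om}^{n+1}=(n+1)\,(\al+td^{*})\we\om^{n}\we d^{*}\we e^{*}=(n+1)\,\al\we\om^{n}\we d^{*}\we e^{*}\neq 0$, because $\al\we\om^{n}\neq 0$ on $\J$. Putting the three steps together shows that $(\tilde{\J},\tilde{\al},\tilde{\om})$ is a $(2n+3)$-dimensional cosymplectic JJ-algebra; in particular $t$ plays no role in any of the arguments.

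I expect the second step — pushing the cyclic identity $\oint\tilde{\om}(x\bu y,z)=0$ through every configuration in which $d$ or $e$ appears, keeping track of the terms coming from $\om_\varphi$ and from $\frac12\om(a,\cdot)$ — to be where the bookkeeping has to be done with care; the first step is essentially a citation of Proposition~\ref{pr 2.1} once a handful of identities on $(\theta,\lambda)$ are verified, and the non-degeneracy is a one-line exterior-algebra computation. (As an alternative route, one could instead deduce the theorem from Proposition~\ref{prop} by realizing $\tilde{\J}\oplus\langle f\rangle$ with the $2$-form $\tilde{\om}+\tilde{\al}\we f^{*}$ as a symplectic double extension of the symplectic JJ-algebra $(\J\oplus\langle f\rangle,\om+\al\we f^{*})$, but the direct verification above seems cleaner.)
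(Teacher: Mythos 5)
Your proposal is correct, and its structural portion coincides with what the paper actually does: the verification of \eqref{alclosed} and \eqref{omclosed} for $(\tilde{\al},\tilde{\om})$ is carried out in the discussion preceding the theorem (where the systems \eqref{c1} and \eqref{c2} are derived, with $\tilde{\al}(e)=0$), the JJ-algebra structure is delegated to Proposition~\ref{pr 2.1}, and the displayed proof consists solely of the non-degeneracy check. Where you genuinely diverge is in that last step. The paper shows that $\Psi:\tilde{\J}\to\tilde{\J}^{*}$, $\tilde{x}\mapsto\tilde{\om}(\tilde{x},\cdot)+\tilde{\al}(\tilde{x})\tilde{\al}(\cdot)$, is injective by writing $\tilde{x}=x_{1}d+x+x_{2}\xi+x_{3}e$ and reading off its pairings with $\h$, $e$, $\xi$ and $d$; you instead compute $\tilde{\al}\we\tilde{\om}^{\,n+1}=(n+1)\,\al\we\om^{n}\we d^{*}\we e^{*}$ directly. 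Your exterior-algebra computation is shorter and makes transparent why the parameter $t$ is irrelevant; the paper's argument avoids the scalar factor $n+1$ (which would matter in positive characteristic), though since the paper itself identifies non-degeneracy with the non-vanishing of the top power, this is not a real obstacle to your route.

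One inaccuracy worth flagging: you assert that the case $x=y=d$, $z\in\J$ of $\oint\tilde{\om}(x\bu y,z)=0$ is ``the step that consumes'' the hypothesis $\om(\varphi(\cdot),a)=0$. It is not: that cyclic sum equals $\om(a,z)-\tfrac12\om(a,z)-\tfrac12\om(a,z)=0$ with no input from that hypothesis, and in fact none of the verifications you list uses it --- the identity $\theta(a,x)=-2\lambda\circ\varphi(x)$ of the Remark also holds automatically from $\varphi(a)=0$ and the skew-symmetry of $\om$, rather than from $\om(\varphi(x),a)=0$ as you suggest. That condition is inherited from the symplectic double-extension machinery of Baklouti--Benayadi that the paper adapts; as a hypothesis of the theorem you are free to leave it unused, but you should not claim to have used it. This does not affect the validity of your proof.
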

	\begin{proof}
		To complete the proof, it suffices to show that $\tilde{\al}\we \tilde{\om}^{(n+1)}\neq 0$. This is equivalent to proving that $\Psi : \tilde{\J} \lr \tilde{\J}^{*}$,  $\tilde{x}  \longmapsto  \tilde{\om}(\tilde{x},.) +\tilde{\al}(\tilde{x})\tilde{\al}(.)$ is an isomorphism. Let $\tilde{x}\in\langle d \rangle\oplus\J \oplus\langle e \rangle$ and write  $\tilde{x}=x_1d+x+x_2\xi+x_3e$, where $x\in\h=\mathrm{ker(\al)}$ and $x_1$, $x_2$, $x_3\in\R$. One can directly verify that
		\[\left\{
		\begin{array}{ll}
			\Psi(\tilde{x},y )&=\om_{\h}(x,y),\\
			\Psi(\tilde{x},e)&=x_1,\\
			\Psi(\tilde{x},\xi)&=x_1 \tilde{\al}(d)+x_2,\\
			\Psi(\tilde{x},d)&=-x_3+(x_1\tilde{\al}(d)+x_2)\tilde{\al}(d).
		\end{array}
		\right.\]
		for any $y \in\h$. Since $\om_{\h}$ is a non-degenerate bilinear form, it follows that if $\Psi(\tilde{x},.)=0$, then $\tilde{x}=0$. Therefore $\Psi$ is injective. 
	\end{proof}

	\textbf{The second case:} 
	$\tilde{\al}(e)\not=0$. 
	
	For simplicity, one can take $\tilde{\al}(e)=1$. So, by \eqref{c1} and  \eqref{c2} we obtain that
	\begin{equation*}
		\theta(x,y)=0,\quad \al(a)=0 \esp   \lambda(x)=-\al\circ \varphi(x)=\frac{1}{2}\om(a,x),
	\end{equation*}
	\begin{equation*}
		\om(\varphi(x),y)=\om(x,\varphi(y)),
	\end{equation*}
	for any $x,y\in\J$. Under the above assumptions and  Proposition \ref{pr 2.1}, one  conclude that
	\begin{theo}
		Let $(\J,\al,\om)$ be a cosymplectic JJ-algebra and $(\varphi,a)$  an admissible pair of $(\J,\cdot)$. Then  $\tilde{\J}=\langle d\rangle\oplus\J \oplus \langle e\rangle\ $ equipped with the product 
	\begin{align*}  
		x\bullet y &= x\cdot y, \\
		d \bullet x&= \varphi(x)+\frac{1}{2}\om(a,x)e, \\
		d\bullet d &= a\in\J,
	\end{align*}
for any $x$ and $y\in\J$.	Then,  
		\[\tilde{\al}_{|\J}=\al+e^*+td^* 
		,\;\;\tilde{\om}=\om + d^{*}\we e^{*},\quad t\in \K\]
		defines a cosymplectic structure on $\tilde{\J}$ if
		$\om_\varphi=0$ and  $\lambda=\frac{1}{2}\om(a,.)=-\al\circ \varphi$. 
	\end{theo}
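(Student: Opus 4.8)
The plan is to mirror the structure of the proof of Theorem~\ref{the4.1}, adapting it to the present case $\tilde{\al}(e)=1$. The overall strategy has three parts: (i) verify that $(\tilde{\J},\bullet)$ is indeed a JJ-algebra, (ii) verify that $\tilde{\al}$ and $\tilde{\om}$ satisfy the closedness conditions \eqref{alclosed} and \eqref{omclosed}, and (iii) check the non-degeneracy condition $\tilde{\al}\we\tilde{\om}^{(n+1)}\neq 0$. Parts (i) and (ii) are essentially bookkeeping: by Proposition~\ref{pr 2.1}, $(\tilde{\J},\bullet)$ is a JJ-algebra as soon as $(\varphi,a)$ is an admissible pair of $(\J,\cdot)$ (with $\theta=0$, the central-extension term drops out and the product on $\J$ is unchanged), and the hypotheses $\om_\varphi=0$ and $\lambda=\tfrac12\om(a,\cdot)=-\al\circ\varphi$ are exactly the specializations of systems \eqref{c1} and \eqref{c2} that were derived just before the statement; so substituting them back in shows \eqref{alclosed} and \eqref{omclosed} hold for $\tilde{\al}$ and $\tilde{\om}$.

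The substantive step is part (iii). Following the proof of Theorem~\ref{the4.1}, I would use the fact that non-degeneracy of $\tilde{\al}\we\tilde{\om}^{(n+1)}$ is equivalent to injectivity of the map $\Psi:\tilde{\J}\lr\tilde{\J}^*$, $\tilde{x}\longmapsto \tilde{\om}(\tilde{x},\cdot)+\tilde{\al}(\tilde{x})\tilde{\al}(\cdot)$. Decompose $\tilde{x}=x_1 d + x + x_2\xi + x_3 e$ with $x\in\h=\ker(\al)$ and $x_1,x_2,x_3\in\K$, where $\xi$ is the Reeb vector of $(\J,\al,\om)$, and evaluate $\Psi(\tilde{x},\cdot)$ on the spanning set $\{d,e,\xi\}\cup\h$. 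Using $\tilde{\om}=\om+d^*\we e^*$, $\om(\xi,\cdot)=0$, $\al(\xi)=1$, $\tilde{\al}(e)=1$, and $\tilde{\al}(d)=t$, one gets a triangular-type system: $\Psi(\tilde{x},y)=\om_\h(x,y)$ for $y\in\h$, together with three scalar equations in $x_1,x_2,x_3$ whose coefficient matrix (in the basis $d,e,\xi$) is invertible because of the $d^*\we e^*$ pairing and the $\al(\xi)=1$, $\tilde{\al}(e)=1$ normalizations. Since $\om_\h$ is non-degenerate on $\h$ (Lemma~\ref{Le3.1}), $\Psi(\tilde{x},\cdot)=0$ forces first $x=0$ and then $x_1=x_2=x_3=0$, hence $\Psi$ is injective.

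The main obstacle I anticipate is organizing the scalar part of the computation in (iii) correctly: because $\tilde{\al}$ now has a nonzero $e^*$-component (unlike in Theorem~\ref{the4.1}), the cross-terms $\tilde{\al}(\tilde{x})\tilde{\al}(\cdot)$ couple the $e$-, $d$- and $\xi$-directions in a way that the earlier proof did not have to handle, so one must be careful that the $3\times 3$ scalar system is genuinely invertible rather than merely triangular. Once the explicit formulas for $\Psi(\tilde{x},e)$, $\Psi(\tilde{x},d)$, $\Psi(\tilde{x},\xi)$ are written down, invertibility should be immediate by inspection. I would present the argument by giving those four evaluation formulas in an aligned display and then reading off injectivity, exactly as in the proof of Theorem~\ref{the4.1}.
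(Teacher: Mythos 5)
Your proposal is correct and follows essentially the same route as the paper: parts (i) and (ii) are dispatched exactly as you say by Proposition~\ref{pr 2.1} together with the specializations of \eqref{c1} and \eqref{c2} derived just before the statement, and the paper's proof consists precisely of your part (iii) — writing $\tilde{x}=x_1d+x+x_2\xi+x_3e$ with $x\in\h$, evaluating $\Psi(\tilde{x},\cdot)$ on $\h$, $\xi$, $e$, $d$, and reading off injectivity from the non-degeneracy of $\om_\h$ plus a triangular scalar system. The coupling you worried about is harmless: $\Psi(\tilde{x},\xi)=0$ kills the scalar $\tilde{\al}(\tilde{x})$ first, after which the remaining equations decouple.
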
	
	\begin{proof}
		As in the proof of Theorem \ref{the4.1}, we only need to show that  $\Psi : \tilde{\J} \lr \tilde{\J}^{*}$ is an isomorphism. Let $\tilde{x}\in\langle d \rangle\oplus\J \oplus\langle e \rangle$ and write  $\tilde{x}=x_1d+x+x_2\xi+x_3e$, where $x\in\h=\mathrm{ker(\al)}$ and $x_1,\,x_2,\,x_3\in\K$. Note that
		\[\left\{
		\begin{array}{ll}
			\Psi(\tilde{x},y )&=\om_{\h}(x,y),\; \forall y \in\h,\\
			\Psi(\tilde{x},\xi)&=x_1\tilde{\al}(d)+x_2-x_3,\\
			\Psi(\tilde{x},e)&=x_1-(x_1\tilde{\al}(d)+x_2-x_3),\\
			\Psi(\tilde{x},d)&=-x_3+(x_1\tilde{\al}(d)+x_2-x_3)\tilde{\al}(d).
		\end{array}
		\right.\]
		It is clear that if $\Psi(\tilde{x},.)=0$, then $\tilde{x}=0$.
	\end{proof}

\begin{theo}	
		Let $(\tilde{\J},\tilde\al,\tilde\om)$ be a cosymplectic JJ-algebra such that $\tilde{\J}\not=\{0\}$. Then,  $(\tilde{\J},\tilde\al,\tilde\om)$ is a double extension of cosymplectic JJ-algebra $(\J,\al,\om)$.
	\end{theo}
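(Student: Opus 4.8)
The plan is to single out a one-dimensional ideal inside $\mathrm{Ann}(\tilde\J)$ lying in $\mathrm{ker}(\tilde\al)$, to take as cosymplectic core the quotient of its $\tilde\om$-orthogonal, and to recognise $(\tilde\J,\tilde\al,\tilde\om)$ as a double extension in the sense of Theorem~\ref{the4.1}. We may assume $\dim\tilde\J=2n+1\ge 3$ (a one-dimensional cosymplectic JJ-algebra is not a double extension). Put $\h=\mathrm{ker}(\tilde\al)$, let $\tilde\xi$ be the Reeb vector, and set $\D=\D_{\tilde\xi}$; by Theorem~\ref{theo1}, $(\h,\om_\h)$ is a symplectic JJ-algebra of dimension $2n$ and $\D$ is an anti-derivation of $\h$ with $\D^{2}=0$ and $\om_\h(\D(x),y)=\om_\h(x,\D(y))$.

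The first and main step is to produce $0\neq e\in\mathrm{Ann}(\tilde\J)\cap\h$. Since $\h$ is a nonzero JJ-algebra it is nilpotent, so $\mathrm{Ann}(\h)\neq\{0\}$. Applying the anti-derivation identity to a product $x\cdot y$ with $x\in\mathrm{Ann}(\h)$ yields $\D\big(\mathrm{Ann}(\h)\big)\subseteq\mathrm{Ann}(\h)$, and since $\D^{2}=0$ the restriction of $\D$ to $\mathrm{Ann}(\h)$ has nonzero kernel. For $x\in\h$ one has $x\cdot\tilde\J=0$ precisely when $\tilde\xi\cdot x=\D(x)=0$ and $x\cdot\h=0$, so $\mathrm{ker}(\D)\cap\mathrm{Ann}(\h)=\mathrm{Ann}(\tilde\J)\cap\h$, which is therefore nonzero; fix $0\neq e$ in it. Then $e\in\mathrm{Ann}(\tilde\J)$ and $\tilde\al(e)=0$.

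Next I would extract the core. Let $e^{\perp}=\{x\in\tilde\J:\tilde\om(x,e)=0\}$. Because $\tilde\al(e)=0$ we have $\Psi(e)=\tilde\om(e,\cdot)$, which is non-zero since $\Psi$ is an isomorphism, so $e^{\perp}$ has codimension one; it contains $e$ and $\tilde\xi$ (as $\tilde\om(\tilde\xi,\cdot)=0$), and substituting $z=e$ in \eqref{omclosed} together with $e\cdot\tilde\J=0$ gives $\tilde\om(x\cdot y,e)=0$ for all $x,y$, so $e^{\perp}\supseteq\tilde\J\cdot\tilde\J$. Hence $e^{\perp}$ is an ideal of $\tilde\J$ and $\langle e\rangle\subseteq e^{\perp}$ is a one-dimensional ideal contained in $\mathrm{Ann}(\tilde\J)$. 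Set $\J=e^{\perp}/\langle e\rangle$ with $\al,\om$ the forms induced by $\tilde\al,\tilde\om$ (they descend since $\tilde\al(e)=0$ and $\tilde\om(e^{\perp},e)=0$); the closedness conditions \eqref{alclosed}, \eqref{omclosed} pass to the quotient, and $\Psi(e)$ spans the line in $\tilde\J^{*}$ annihilating $e^{\perp}$, so $\Psi^{-1}$ of that line is $\langle e\rangle$ and consequently $x\mapsto\om(x,\cdot)+\al(x)\al(\cdot)$ is injective on $\J$, i.e. $\al\we\om^{\,n-1}\neq 0$. Thus $(\J,\al,\om)$ is a cosymplectic JJ-algebra of dimension $2n-1$.

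It remains to adjoin $d$ and read off the data. Pick $d\notin e^{\perp}$ with $\tilde\om(d,e)=1$, and let $V=\{x\in e^{\perp}:\tilde\om(d,x)=0\}$: a complement of $\langle e\rangle$ in $e^{\perp}$ containing $\tilde\xi$, with $\tilde\om(d,V)=0$. Then $d\cdot d\in e^{\perp}$ automatically lies in $V$, since its $e$-component equals $\tilde\om(d,d\cdot d)=-\tilde\om(d\cdot d,d)=0$ by \eqref{omclosed}. Identifying $\J$ with $V$ and $e^{\perp}$ with the central extension $\J_{(\theta,e)}$ ($\theta$ recording the $e$-component of the product), the product of $\tilde\J$ takes the form \eqref{nrod}, so by Proposition~\ref{pr 2.1} the resulting $(\varphi,a)$ is an admissible pair and $\D=\varphi+\lambda e$ an anti-derivation; furthermore $\tilde\om=\om+d^{*}\we e^{*}$ and $\tilde\al|_{\J}=\al$, $\tilde\al(e)=0$, $\tilde\al(d)=t\in\K$, and the remaining relations ($\theta=\om_\varphi$, $\al\circ\varphi=0$, $\om(\varphi(x),a)=0$, $\lambda=\tfrac12\om(a,\cdot)$) are exactly the content of \eqref{c1}--\eqref{c2}, which hold because $\tilde\al$ and $\tilde\om$ are closed. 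Hence $(\tilde\J,\tilde\al,\tilde\om)$ is the double extension of $(\J,\al,\om)$ of Theorem~\ref{the4.1}. I expect the delicate point to be not any single computation but the bookkeeping of this last paragraph — matching the intrinsic objects $e^{\perp}/\langle e\rangle$ and the abstract double-extension data $(\theta,\varphi,\lambda,a)$, and checking one is in the first case $\tilde\al(e)=0$; an alternative route is to apply the structure theorem for symplectic JJ-algebras of \cite{B-B} to $\h$ with the above $e$ and carry $\D_{\tilde\xi}$ through it via Theorem~\ref{theo1}.
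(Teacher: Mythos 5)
Your proof is correct, but it follows a genuinely different route from the paper's. The paper first splits $\tilde\J=\langle\xi\rangle\oplus\ker\tilde\al$ and invokes the structure theorem for symplectic JJ-algebras (Theorem 3.3 of \cite{B-B}) on the symplectic ideal $\ker\tilde\al$ to obtain the distinguished vectors $e$, $d$ and the symplectic double-extension data; the remaining work is then the bookkeeping of the $\xi$-action, writing $\xi\bullet x=f(x)d+\D(x)+l(x)e$ and $d\bullet\xi=kd+v+te$, using \eqref{omclosed} to kill $f$, $k$, $t$, and extending $\varphi$ by $\varphi(\xi)=v$ so that the core is the subalgebra $\langle\xi\rangle\oplus\h$. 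You instead work intrinsically in $\tilde\J$: you manufacture $0\neq e\in\mathrm{Ann}(\tilde\J)\cap\ker\tilde\al$ by hand (nilpotency of finite-dimensional JJ-algebras gives $\mathrm{Ann}(\h)\neq\{0\}$, the anti-derivation identity gives $\D$-invariance of $\mathrm{Ann}(\h)$, and $\D^2=0$ gives a kernel vector there), then take the core to be the quotient $e^{\perp}/\langle e\rangle$ and read off $(\theta,\varphi,\lambda,a)$ from the decomposition $\langle d\rangle\oplus V\oplus\langle e\rangle$. What each approach buys: the paper's is shorter given the cited theorem and exhibits the core as an explicit subspace containing the Reeb vector, while yours is self-contained (it essentially re-proves the cosymplectic analogue of the \cite{B-B} reduction in one pass), makes transparent where the central element $e$ comes from, and avoids the three-summand interaction analysis. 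Two small points to tighten: cite Burde--Fialowski for the nilpotency of finite-dimensional JJ-algebras over fields of characteristic $\neq 2,3$ (the paper uses this only implicitly through \cite{B-B}); and note that your explicit exclusion of $\dim\tilde\J=1$ is actually needed, since the theorem as stated fails for the one-dimensional cosymplectic JJ-algebra, a case the paper's proof also tacitly ignores when it applies the symplectic structure theorem to $\ker\tilde\al$.
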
	
\begin{proof}
Let $(\tilde{\J},\tilde\al,\tilde\om)$ be a cosymplectic JJ-algebra and $``\bullet"$ be its product. It follows from Lemma \ref{Le3.1} that $\tilde{\J}=\langle \xi \rangle\oplus \tilde \h$ where 
$\tilde \h= \ker (\tilde\al)$ and $ (\tilde\h,\tilde\om_{\tilde\h})$ is a symplectic JJ-algebra. Consequently, Theorem 3.3 \cite{B-B} implies that $ (\tilde\h,\tilde\om_{\tilde\h}) $ is a symplectic double extension of a symplectic JJ-algebra $ (\h,\om)$. i.e., there exists an admissible pair  $(\varphi,a)$ of $(\h,\cdot)$, such that $\tilde\h=\langle d \rangle\oplus\h \oplus\langle e \rangle$
	\begin{align*}  
	x\bullet y &= x\cdot y+\om_\varphi(x,y)e, \\
	d \bullet x&= \varphi(x)+\frac{1}{2}\om(a,x)e, \\
	d\bullet d &= a,
\end{align*}
 $\tilde\om_{\tilde \h}=\om+d^*\we e^*$ and $\om(\varphi(x),a)=0$, for any $x,y\in\h$. Note that $\om=\tilde\om_{\h}$ and $\tilde\al\circ\varphi=0$.

In $\tilde\J=\xi\oplus\tilde\h$, for all $x\in\tilde\h$, we have  
\begin{align*}  
	\xi\bullet x &= f(x)d+D(x)+l(x)e, \quad\\
	d \bullet \xi&=kd+v+te,\quad v\in\h, \; k,t\in\K,
\end{align*}
where  $D$ is an endomorphism of $\h$, $f$ and $l$ are linear forms of $\h$. Since $\tilde\om$ satisfies \eqref{omclosed}, then $f\equiv0$, $k=0$, $t=0$, $l(x)=\om(v,x)$  and $\tilde\om_\h(D(x),y)=\tilde\om_\h (x,D(y))$ for any $x,y\in \h$.

We extend $ \varphi$    on  $\J=\xi\oplus\h $ by $\varphi(\xi)=v$.
 
 The product becomes
 \begin{align*}  
 	x\bullet y &= x\cdot y+\tilde\om_{\varphi}(x,y)e, \\
 	\xi\bullet x &=D(x)+\tilde\om_{\varphi}(\xi,x)e,\\
 	d \bullet x&= \varphi(x)+\frac{1}{2}\tilde\om(a,x)e, \\ 	
 	d \bullet \xi&=\varphi(\xi),\\
 	d\bullet d &= a.
 \end{align*} 
  Since the product $``\bullet"$ satisfies Jacobi identity, then $\D \in \mathrm{Ader}(\J)$ such that $\D^2=0$. By Theorem \ref{theo1}  it follows that $(\J,,\tilde\al_{\J},\tilde\om_\J)$ is a cosymplectic JJ-algebra and $(\varphi,a)$ is an admissible pair of $(\J,\cdot)$. In addition, $\oint\om(\xi \bullet d,a)=0$ implies that $\tilde\om_\J(\varphi(\xi),a)=0$. Finally,  $(\tilde{\J},\tilde\al,\tilde\om)$   is cosymplectic JJ-algebra double extension of $(\J,\tilde\al_{\J},\tilde\om_\J)$.
\end{proof}

		\textbf{Non-trivial examples in every odd dimension}
		
	Using the previous constructions, we give examples of non-trivial cosymplectic JJ-algebra in every odd dimension. 
	
	Let $(\K^{2n+1},\om,\al)$, with $\om=\sum_{i=1}^ne^i\we e^{i+n}$ and $\al=e^{2n+1}$  be the trivial $(2n+1)$-dimensional cosymplectic JJ-algebra. According to Theorem \ref{the4.1}, it suffices to search for an admissible pair $(\varphi,a)$  of $\K^{2n+1}$ that satisfies  $\al\circ \varphi=0$ and $\om(\varphi(x),a)=0$, for any $x\in\J$. Consider for example
		\[\left\{
	\begin{array}{rlr}
\varphi(e_{2n+1})&=\sum_{i=1}^{2n}\varphi_i e_i\esp \varphi(e_i)=0,& 1 \leqslant i\leqslant 2n,\\
a&=\sum_{i=1}^{2n}a_i e_i,&\\
\varphi_{n+i} a_i&=\varphi_i a_{n+i}, &1 \leqslant i\leqslant n.
	\end{array}
	\right.\]
In this case,  a direct calculation yields to
	\[\left\{
\begin{array}{ll}
\lambda(e_i)=-\frac12 a_{n+i},& 1 \leqslant i\leqslant n\\
\lambda(e_i)=\frac12 a_{i},& n+1 \leqslant i\leqslant 2n\\
\end{array}
\right. \esp \left\{
\begin{array}{ll}
	\theta(e_i,e_j)=\theta(e_{2n+1},e_{2n+1})=0,& 1 \leqslant i,j\leqslant 2n\\
	\theta(e_{2n+1},e_i)=-\varphi_{i+n},& 1 \leqslant i\leqslant n\\
		\theta(e_{2n+1},e_i)=\varphi_{i},& n+1 \leqslant i\leqslant 2n.
\end{array}
\right.\]
Finally, the vector space  $\tilde\J=\langle e_{2n+3} \rangle\oplus\K^{2n+1}\oplus \langle e_{2n+2} \rangle$, endowed with the product
\[\left\{
\begin{array}{ll}
e_{2n+1}\cdot e_i= -\varphi_{n+i}e_{2n+2},& 1 \leqslant i\leqslant n\\
e_{2n+1}\cdot e_i= -\varphi_{i}e_{2n+2},& n+1 \leqslant i\leqslant 2n\\
e_{2n+3}\cdot e_i= -\frac12 a_{n+i}e_{2n+2},& 1 \leqslant i\leqslant n\\
e_{2n+3}\cdot e_i= \frac12 a_{i}e_{2n+2},& n+1 \leqslant i\leqslant 2n\\
e_{2n+3}\cdot e_{2n+3}= a,& 
\end{array}
\right.\]
and the following structure
\[\tilde\al=e^{2n+1}\esp \tilde\om=\om+e^{2n+2}\we e^{2n+3},\]
is a cosymplectic double extension  of the trivial  cosymplectic JJ-algebra $(\K^{2n+1},\om,\al)$.	
\section{Five-dimensional cosymplectic JJ-algebras}
	We use Theorem \ref{theo1} and Proposition~\ref{Pr3} to give a complete classification of five-dimensional cosymplectic JJ-algebras.
	\begin{theo}	
		Any five-dimensional cosymplectic non trivial JJ-algebra is isomorphic to one of the following JJ-algebras:
		\begin{enumerate}				
			\item[$\J_{5,1}:$]  $e_1\cdot e_1 = e_2$,\; $e_1\cdot e_3 = e_4$,\; $e_1\cdot e_5=e_3$,\; $e_2\cdot e_5=-2e_4$, 
				\item[$\J_{5,2}:$] $e_1\cdot e_1 = e_2$,\; $e_1\cdot e_3 = e_4$,\; $e_1\cdot e_5=e_2$,\; $e_3\cdot e_5=2e_4$,
		
			\item[$\J_{5,3}:$] $e_1\cdot e_1 = e_2$,\; $e_1\cdot e_3 = e_4$,			
				\end{enumerate}	
endowed	 with $\al=e^5, \;\om=e^{14}+2e^{23}.$
	\end{theo}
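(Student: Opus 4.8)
The plan is to leverage Theorem~\ref{theo1} and Proposition~\ref{Pr3}, which together reduce the classification of five-dimensional cosymplectic JJ-algebras to the classification of pairs $(\h,\D)$, where $(\h,\om_\h)$ is a four-dimensional symplectic JJ-algebra and $\D$ is an anti-derivation of $\h$ satisfying $\D^2=0$ and $\om_\h(\D(x),y)=\om_\h(x,\D(y))$ for all $x,y\in\h$; moreover, by Proposition~\ref{Pr3} two such pairs give isomorphic cosymplectic JJ-algebras precisely when they are conjugate under a symplectic automorphism of $\h$. So the first step is to invoke the known fact (recorded in the Example after the definition of symplectic JJ-algebras) that any four-dimensional symplectic JJ-algebra is isomorphic to the unique one $(H_4,\om)$ with $e_1\cdot e_1=e_2$, $e_1\cdot e_3=e_3\cdot e_1=e_4$ and $\om=e^{12}+2e^{23}$; thus $\h=H_4$ and $\om_\h=\om$ are fixed once and for all, and the whole problem becomes: classify, up to symplectic conjugacy, the anti-derivations $\D$ of $H_4$ with $\D^2=0$ and $\D$ self-adjoint with respect to $\om$.

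Next I would determine $\mathrm{Ader}(H_4)$ explicitly. Writing $\D$ as a $4\times4$ matrix in the basis $\{e_1,e_2,e_3,e_4\}$ and imposing $\D(x\cdot y)=-\D(x)\cdot y-x\cdot\D(y)$ on the generating products $e_1\cdot e_1=e_2$, $e_1\cdot e_3=e_4$ (and checking the rest vanish or are forced) yields a finite-dimensional space of linear constraints on the entries of $\D$; note that $e_2,e_4$ lie in the annihilator-type part of $H_4$, so the conditions on the columns of $\D$ over $e_1,e_3$ will be the substantive ones, while the images $\D(e_2),\D(e_4)$ are then essentially determined. Then I impose the two extra conditions: self-adjointness $\om(\D(x),y)=\om(\D(y),x)$ cuts the parameter space down further, and nilpotency $\D^2=0$ imposes quadratic relations. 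After these reductions one is left with a low-dimensional family of admissible $\D$'s parametrised by a handful of scalars.

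The final and most delicate step is to act by the symplectic automorphism group $\mathrm{Aut}(H_4,\om)$ on this family of $\D$'s and pick one representative from each orbit. I would first compute $\mathrm{Aut}(H_4,\om)$ — automorphisms of the JJ-algebra $H_4$ that preserve $\om$ — which is again obtained by writing a general invertible matrix, imposing that it respects the two defining products and fixes $\om$; this gives a (solvable, low-dimensional) group. Conjugating the normal forms of $\D$ by this group, I expect the orbits to collapse to exactly three nonzero cases (the case $\D=0$ being the trivial cosymplectic JJ-algebra, which the statement excludes), and translating each back through the construction $\J=\langle\xi\rangle\oplus H_4$ with $\xi\cdot x=\D(x)$, $\xi\cdot\xi=0$, $\xi=e_5$, $\al=e^5$, $\om=e^{14}+2e^{23}$ should reproduce precisely $\J_{5,1}$, $\J_{5,2}$, $\J_{5,3}$. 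The main obstacle is the bookkeeping in this last orbit analysis: one must be careful that the self-adjointness and $\D^2=0$ conditions interact correctly with the automorphism action, and that no two of the three listed algebras are accidentally isomorphic — the latter can be settled by an invariant such as $\dim\ker\D$, $\dim(\J\cdot\J)$, or the isomorphism type of $\J/\langle\xi\rangle$'s derived structure. Finally I would double-check directly that each $\J_{5,i}$ with the stated $(\al,\om)$ indeed satisfies \eqref{alclosed}, \eqref{omclosed} and $\al\we\om^2\neq0$, so that the list consists of genuine, pairwise non-isomorphic cosymplectic JJ-algebras.
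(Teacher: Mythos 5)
Your overall strategy is the same as the paper's: reduce via Theorem~\ref{theo1} and Proposition~\ref{Pr3} to classifying triples $(\h,\om_\h,\D)$ with $\D$ an $\om_\h$-self-adjoint anti-derivation satisfying $\D^2=0$, then normalise $\D$ under automorphisms. However, there are two genuine gaps. First, you fix $\h=H_4$ ``once and for all,'' reading the Example as saying that \emph{every} four-dimensional symplectic JJ-algebra is $(H_4,\om)$. That uniqueness statement only concerns non-trivial (non-abelian) symplectic JJ-algebras; the abelian algebra $\K^4$ with its standard symplectic form is also a four-dimensional symplectic JJ-algebra, and taking $\D\neq0$ on it produces a five-dimensional cosymplectic JJ-algebra whose product is \emph{not} trivial (e.g.\ $e_5\cdot e_2=e_1$, $e_5\cdot e_4=e_3$). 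The paper devotes the entire second half of its proof to this case: it runs through the canonical forms $\D_0,\D_1,\D_2$ of square-zero endomorphisms of $\K^4$, shows $\D_1$ is incompatible with non-degeneracy of $\om$, and shows the algebra arising from $\D_2$ is isomorphic to $\J_{5,2}$. Without this analysis your classification is not complete — you cannot know a priori that the abelian-kernel case contributes nothing new to the list.

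Second, your parenthetical ``the case $\D=0$ being the trivial cosymplectic JJ-algebra, which the statement excludes'' is wrong and would cost you one of the three algebras. With $\h=H_4$ and $\D=0$ the resulting five-dimensional algebra is $H_4\oplus\langle\xi\rangle$, i.e.\ exactly $\J_{5,3}$, which is a non-trivial JJ-algebra (only abelian $\h$ together with $\D=0$ gives the trivial one). Correspondingly, the paper finds only \emph{two} orbits of nonzero $\D$ on $H_4$ (parametrised by $a_{3,1}\neq0$ and $a_{3,1}=0,\ a_{2,1}\neq0$), not three; the third listed algebra comes from $\D=0$. A minor further point: rather than fixing $\om_\h$ and computing the symplectic automorphism group as you propose, the paper keeps a scaling parameter $t$ in $\om_\h$ and lets the full automorphism group of $H_4$ act on the pair $(\om_\h,\D)$, normalising both simultaneously; the two bookkeeping schemes are equivalent, but yours additionally requires knowing that all symplectic forms on $H_4$ are equivalent under $\mathrm{Aut}(H_4)$ before restricting to the stabiliser.
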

	\begin{proof}
	On the one hand, let $(\h, \om_\h)$ be the unique four-dimensional symplectic non-trivial JJ-algebra defined by $e_1\cdot e_1 = e_2$,\; $e_1\cdot e_3 = e_3\cdot e_1 = e_4$, where $\mathfrak{B}:=\{e_1, e_2, e_3, e_4\}$ is a	basis of the vector space $\h$ and	 $\om_h=te^{14}+2te^{23}$ for $t\in\K-\{0\}$.  Let $\D : \h\lr\h$ be a linear map represented in the base  $\mathfrak{B}$ by the matrix $\D=(a_{i,j})_{1\leqslant i,j \leqslant4}$. A direct calculation shows that  $\D$ is an anti-derivation if and only if $a_{i,j}$ satisfies
		\[
		\left \{
		\begin{array}{l @{=} c}
			a_{1,1}+a_{4,4}+a_{3,3} & 0 \\
			a_{2,2} - 2 a_{4,4}-2 a_{3,3} &0\\
			a_{4,2}+2 a_{3,1}	&0\\
			a_{1,2}=a_{1,3}=a_{1,4}&0\\
			a_{2,4}=a_{3,2}=a_{3,4}&0.
		\end{array}
		\right.
		\]
		In addition, if  $\om_\h(\D(e_i),e_j)=\om_\h(e_i,\D(e_j))$ holds for $1\leqslant i,j \leqslant4$, then we have		
		\[
		\left \{
		\begin{array}{l @{=} c}
			2a_{2,1}-a_{4,3} & 0 \\
			2a_{4,4}+a_{3,3} &0\\
			a_{4,1}=a_{2,3}&0.
		\end{array}
		\right.
		\]
		It follows from the condition $\D^2=0$ that $a_{4,4} = 0$.
		
		Therefore, any anti-derivation $\D$ has the following form
		\[\D=\begin{pmatrix}
			0 & 0 & 0 & 0 \\
			a_{2,1} & 0 & 0 & 0\\
			a_{3,1} & 0 & 0 & 0 \\
			0 & -2 a_{3,1} & 2 a_{2,1} & 0 
		\end{pmatrix},
		\]
		Furthermore, one can distinguish  three cases.
		\begin{enumerate}	
			\item If $a_{3,1}\not=0$, one can take $t=a_{3,1}$ the following automorphism $T$ of $\h$, satisfies
			\[T=\begin{pmatrix}
				1 & 0 & 0 & 0\\
				0 & 1 & -\frac{a_{2,1}}{a_{3,1}} & 0 \\
				0 & 0 & \frac{1}{a_{3,1}} & 0 	\\
				0 & 0 & 0 & \frac{1}{a_{3,1}} 
			\end{pmatrix},\quad T.\D.T^{-1}=\begin{pmatrix}
				0 & 0 & 0 & 0\\
				0 & 0 & 0 & 0 \\
				1 & 0 &0 & 0 	\\
				0 & -2 & 0 &0 
			\end{pmatrix} \esp T_{*}\om_h=e^{14}+2e^{23}.\]
			Which establishes the cosymplectic JJ-algebra $(\J_{5,1},e^{14}+2e^{23},e^5)$.
			\item If $a_{3,1}=0$ and $a_{2,1}\not=0$ , one can take $t=a_{2,1}$ the following automorphism $T$ of $\h$, satisfies
			\[T=\begin{pmatrix}
				\frac{1}{a_{2,1}} & 0 & 0 & 0\\
				0 & \frac{1}{a_{2,1}^{2}} & 0 & 0\\
				0 & 0 & 1 & 0 \\
				0 & 0 & 0 & \frac{1}{a_{2,1}}
			\end{pmatrix},\quad T.\D.T^{-1}=\begin{pmatrix}
				0 & 0 & 0 & 0\\
				1 & 0 & 0 & 0 \\
				0 & 0 &0 & 0 	\\
				0 & 0 & 2 &0 
			\end{pmatrix} \esp T_{*}\om_h=e^{14}+2e^{23}.\]
			Which gives the cosymplectic JJ-algebra $(\J_{5,2},e^{14}+2e^{23},e^5)$.
			\item If $a_{3,1}=0$ and $a_{2,1}=0$. We have directly the cosymplectic JJ-algebra $(\J_{5,3},e^{14}+2e^{23},e^5)$.
		\end{enumerate}	
		
	On the other hand, let us consider $(\K^4,\om,\D)$ to be a four-dimensional symplectic trivial JJ-algebra, where $\D$ is an endomorphism of $\h_0$ such that $\D^2=0$. Then there exists a basis of  $\K^4$ where $\D$ has one of the following canonical forms:
		\[\D_0=(0),\quad \D_1=\begin{pmatrix}
			0 & 1 & 0 & 0\\
			0 & 0 & 0 & 0 \\
			0 & 0 &0 & 0 	\\
			0 & 0 & 0 &0 
		\end{pmatrix} \esp  \D_2=\begin{pmatrix}
			0 & 1 & 0 & 0\\
			0 & 0 & 0 & 0 \\
			0 & 0 &0 & 1 	\\
			0 & 0 & 0 &0
		\end{pmatrix} .\]
		Let $\om=\sum_{1\leqslant i,j \leqslant4} a_{ij}e^{ij}$ be an arbitrary tow form.
		A direct calculation shows that  if \[\om(\D_2(e_i),e_j)=\om(e_i,\D_2(e_j))\] holds for $1\leqslant i,j \leqslant4$. Then, we obtain 
		$a_{12}=a_{13}=a_{34}=0$ and $a_{14}=a_{23}$.	Therefore, $\om=a_{23}e^{14}+a_{23}e^{23}+a_{24}e^{24}$.		
		 In addition, $\om$ is non-degenerate, then  $a_{23}\not=0$.  
		 
		 The following automorphism $T$ of $\K^4$, satisfies
		\[T=\begin{pmatrix}
			-	\frac{1}{a_{2,3}} & 0 & 0 & 0\\
			0 & -	\frac{1}{a_{2,3}} &0 & 0 \\
			0 & 0 & 1 &-\frac{a_{2,4}}{a_{2,3}}	\\
			0 & 0 & 0 &1
		\end{pmatrix},\quad T.\D_2.T^{-1}=\D_2 \esp T_{*}\om=e^{14}+e^{23}.\]
		Which establishes the cosymplectic JJ-algebra $(\J_{5,0},\om_0=e^{14}+e^{23},\al_0=e^5)$.
		
		In the same way, 	$\om(\D_1(e_i),e_j)=\om(e_i,\D_1(e_j))$ holds for $1\leqslant i,j \leqslant4$ we find that $a_{12}=a_{13}=a_{14}=0$. In this case, $\om$ is degenerate. It is clear that for $ \D_0$ we obtain the trivial five-dimensional cosymplectic JJ-algebra.  
		Note that the JJ-algebras  $\J_{5,1}$, $\J_{5,2}$ and $\J_{5,3}$ are non-isomorphic. In fact it suffices  to compare the dimensions of their centers and  derived sub-algebras. Otherwise the  cosymplectic JJ-algebras  $(\J_{5,0},\om_0,\al_0)$ and $(\J_{5,2},\om,\al)$ are isomorphic by the following isomorphism $\Phi : \J_{5,0} \lr \J_{5,2}$, which is given by
		\[\Phi(e_1)=2e_2-\frac12e_5,\;\Phi(e_2)=2e_1,\;\Phi(e_3)=e_4,\;\Phi(e_4)=\frac12e_3,\;\Phi(e_5)=e_5.\] 
	\end{proof}
	
\begin{remark}
We finish by comparing our JJ-algebras  with the ones given in \cite{Y}, by the following table. 
\begin{center}
\begin{tabular}{|c|c|c|}
	\hline
$\J_{5,1}$	&$\J_{5,2}$ &$\J_{5,3}$ \\
	\hline
$\J_{8,5}$	& $\J_{6,5}$& $\J_{1,4}\oplus\K$\\
	\hline
\end{tabular}
\end{center}
\end{remark}

\end{document}